\newtheorem{lem}[equation]{Lemma}
\newtheorem{cor}[equation]{Corollary}
\newtheorem{prop}[equation]{Proposition}
\newtheorem{longroot}[equation]{Long Root Proposition}
\newtheorem{shortroot}[equation]{Short Root Proposition}
\newtheorem{specthm}{Theorem}
\newtheorem*{thm*}{Theorem}
\newtheorem*{prop*}{Proposition}
\newtheorem*{cor*}{Corollary}
\newtheorem*{lem*}{Lemma}
\newtheorem*{MT*}{Main Theorem}
\newtheorem*{ques*}{Question}
\theoremstyle{definition} %
\newtheorem*{defn*}{Definition}
\newtheorem{eg}[equation]{Example}
\theoremstyle{remark} %
\newtheorem{rmk}[equation]{Remark}
\newtheorem*{rmk*}{Remark}
\newtheorem*{rmks*}{Remarks}
\DeclareMathOperator{\rank}{rank}
\DeclareMathOperator{\car}{char}
\DeclareMathOperator{\Lie}{Lie}
\DeclareMathOperator{\im}{im}
\newcommand{\C}{\mathbb{C}}
\newcommand{\Z}{\mathbb{Z}}
\newcommand{\gn}{\mathfrak{n}}
\newcommand{\gd}{\mathfrak{d}}
\newcommand{\glong}{\mathfrak{\g_{>}}}
\newcommand{\Glong}{G_{>}}
\newcommand{\gshort}{\mathfrak{\g_{<}}}
\newcommand{\Gshort}{G_{<}}
\newcommand{\lie}{\mathfrak{g}}
\newcommand{\lsub}{\mathfrak{h}}
\newcommand{\m}{\mathfrak{m}}
\newcommand{\qform}[1]{{\left\langle{#1}\right\rangle}}                   
\DeclareMathOperator{\SL}{SL}
\DeclareMathOperator{\Sp}{Sp}
\DeclareMathOperator{\SO}{SO}
\DeclareMathOperator{\PSp}{PSp}
\newcommand{\psp}{\mathfrak{psp}}
\DeclareMathOperator{\GSp}{GSp}
\newcommand{\gsp}{\mathfrak{gsp}}
\DeclareMathOperator{\GL}{GL}
\DeclareMathOperator{\Ad}{Ad}
\DeclareMathOperator{\Spin}{Spin}
\DeclareMathOperator{\End}{End}
\newcommand{\g}{\mathfrak{g}}
\newcommand{\gt}{\tilde{\g}}
\newcommand{\n}{\mathfrak{n}}
\newcommand{\gl}{\mathfrak{gl}}
\renewcommand{\sl}{\mathfrak{sl}}
\newcommand{\pgl}{\mathfrak{pgl}}
\renewcommand{\sp}{\mathfrak{sp}}
\newcommand{\spin}{\mathfrak{spin}}
\newcommand{\so}{\mathfrak{so}}
\DeclareMathOperator{\Sym}{S}
\newcommand{\z}{\mathfrak{z}}
\newcommand{\Gm}{\mathbb{G}_m}
\newcommand{\drho}{\mathrm{d}\rho}
\newcommand{\dpi}{\mathrm{d}\pi}
\newcommand{\la}{\lambda}
\newcommand{\ot}{\otimes}
\newcommand{\eps}{\varepsilon}
\newcommand{\bx}{\bar{x}}
\numberwithin{equation}{section}
\begin{document}

\title[Generically free representations: extremely bad characteristic]{Generically free representations III: \\extremely bad characteristic}

\begin{abstract}
In parts I and II, we determined which faithful irreducible representations $V$ of a simple linear algebraic group $G$ are generically free for $\Lie(G)$, i.e., which $V$ have an open subset consisting of vectors whose stabilizer in $\Lie(G)$ is zero, with some assumptions on the characteristic of the field.  This paper settles the remaining cases, which are of a different nature because $\Lie(G)$ has a more complicated structure and there need not exist general dimension bounds of the sort that exist in good characteristic. 
\end{abstract}

\author[S. Garibaldi]{Skip Garibaldi}

\author[R.M. Guralnick]{Robert M. Guralnick}

\subjclass[2010]{20G05 (primary); 17B10 (secondary)}

%
\maketitle

Let $G$ be a simple algebraic group over an algebraically closed field $k$.  In case $k = \C$, it has been known for more than 40 years which irreducible representations $V$ of $G$ are \emph{generically free}, i.e., have the property that the stabilizer in $G$ of a generic $v \in V$ is the trivial group scheme.  Recent applications of this to the theory of essential dimension have motivated the desire to extend these results to arbitrary $k$.
We did this in previous papers --- \cite{GG:spin}, \cite{GurLawther}, \cite{GG:large}, and \cite{GG:irred} --- except for a handful of cases that we address here, completing the solution to the problem.  In particular we prove the following, which was announced at the end of \cite{GG:large}.   

\begin{specthm} \label{MT.group}
Let $\rho \!: G \to \GL(V)$ be a faithful irreducible representation of a simple algebraic group over an algebraically closed field $k$.
\begin{enumerate}
\item \label{MT.group.et} $G_v$ is finite \'etale for generic $v \in V$ if and only if $\dim V > \dim G$ and $(G, V)$ does not appear in Table \ref{irred.nvfree}.
\item \label{MT.group.free} $G$ acts generically freely on $V$ if and only if $\dim V > \dim G$ and $(G, V)$ appears in neither Table \ref{irred.nvfree} nor Table \ref{ngenfree}.
\end{enumerate}
\end{specthm}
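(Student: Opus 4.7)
The plan is to combine the results of Parts I and II with a case-by-case analysis of a short explicit list of residual pairs $(G,V)$, each involving a simple group in an extremely bad characteristic where $\Lie(G)$ has unusual structure (a nontrivial center, or an exceptional isogeny).

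For necessity in both (1) and (2), if $G_v$ is finite for a generic $v$, then $Gv$ has dimension $\dim G$; since $0 \in V$ is a $G$-fixed point not contained in the closure of the generic orbit, $\dim V > \dim G$ is forced. The entries in Tables \ref{irred.nvfree} and \ref{ngenfree} are known exceptions whose generic stabilizers are computed in the previous papers of the series, so the necessity part reduces to checking that those entries really do behave as claimed.

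For sufficiency, the papers \cite{GG:spin}, \cite{GurLawther}, \cite{GG:large}, and \cite{GG:irred} already settle all $(G,V)$ except for an explicit finite list in extremely bad characteristic: principally $\car k = 2$ for classical types $B_n, C_n, D_n$ and exceptional types $F_4, G_2$, together with $\car k = 3$ for $G_2$. For each residual pair I would first test the dimension bound $\dim V > \dim G$; pairs violating it land in Table \ref{irred.nvfree}. Otherwise I would exhibit an explicit $v \in V$ (typically a sum of weight vectors whose weights generate the character lattice of a maximal torus $T$) and compute $G_v$ directly: the choice of weights pins $G_v$ inside $N_G(T)$, and the residual action on the coefficients reduces to a finite calculation yielding either a trivial, finite étale, or positive-dimensional stabilizer, placing the pair in the appropriate table (or outside both). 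Since the relevant conditions are open on $V$, a single such witness $v$ suffices.

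The main obstacle is the passage from $G_v$ to its scheme-theoretic structure in extremely bad characteristic: when $\Lie(G)$ has a nonzero center or $G$ admits an exceptional isogeny (as for $B_n, C_n, F_4$ in characteristic $2$ and for $G_2$ in characteristic $3$), the usual identification of $\Lie(G_v)$ with $\g_v := \{x \in \g : \drho(x)v = 0\}$ can fail, so one must separately verify that no infinitesimal subgroup scheme survives in $G_v$. I would handle this by computing $\g_v$ on the chosen test vector from the explicit root-space description of $\g$, and by checking that $G_v$ contains no nontrivial element of order $\car k$ using the list of possible stabilizer types established in Parts I--II. Assembling these per-case verifications into Tables \ref{irred.nvfree} and \ref{ngenfree} completes the proof.
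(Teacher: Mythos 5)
There is a genuine gap: you have mischaracterized the size and nature of the residual problem. The cases not settled by Parts I--II are exactly those where $\car k$ is \emph{special} ($G_2$ with $p=3$; $B_n$, $C_n$, $F_4$ with $p=2$ --- note that $D_n$ does not occur), and these form \emph{infinite families}: all $n\ge 2$ and, for each group, all faithful irreducible highest weights, including non-restricted ones. Your proposed method --- pick an explicit witness $v$ as a sum of weight vectors and reduce to ``a finite calculation'' --- cannot dispose of an infinite family of representations of unbounded dimension, and it is not how the paper proceeds except for a handful of small computer-checked modules. The paper's actual engine is the dimension-counting criterion $\dim x^G + \dim V^x < \dim V$ (Lemma \ref{ineq}), applied only to toral and square-zero nilpotent classes (Lemma \ref{ineq.spin}) and combined with the ideal structure $\gn\subseteq\m\subseteq\g$, the long and short root subgroup propositions (\ref{glong.prop}, \ref{gshort.prop}), and the Heisenberg algebra of \S\ref{heis.sec}; this yields general bounds (Prop.~\ref{FG.dim}, Lemma \ref{B.faithful}, Prop.~\ref{C.faithful}) covering all but finitely many weights, after which L\"ubeck's tables reduce to a short list. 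Your proposal contains none of this, so the sufficiency direction is unproved for almost all of the cases the paper exists to handle.

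Two smaller points. First, your worry that ``the usual identification of $\Lie(G_v)$ with $\g_v$ can fail'' is misplaced: that identification always holds; the correct and simple reduction (which the paper uses) is that $G_v$ is finite \'etale iff $\g_v=0$, i.e., iff $\g$ acts generically freely, so part (1) is exactly the Lie-algebra statement (Theorem A of Part II plus Theorem \ref{MT.special} here). Second, the passage from (1) to (2) is not another round of witness computations: given (1), one only needs to enumerate the $V$ with $\dim V>\dim G$, $\g_v=0$, and $G_v(k)\ne 1$, and this enumeration is imported wholesale from \cite{GurLawther}. Your final assembly should be organized around these two reductions rather than around per-case stabilizer computations.
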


We say that $V$ is \emph{faithful} if $\ker \rho$ is the trivial group scheme.  Every generically free representation is faithful.  The hypothesis that $\rho$ is faithful in Theorem \ref{MT.group} excludes those representations that factor through a special isogeny of $G$; nonetheless, we do consider such representations in detail in this paper.

\begin{table}[htbp]
\begin{tabular}{cccrc||cccrc}
$G$&$\car k$&rep'n&$\dim V$&$\dim \g_v$&$G$&$\car k$&high weight&$\dim V$&$\dim \g_v$\\ \hline\hline
$\SL_8/\mu_4$&2&$\wedge^4$&70&3&$\Sp_8$&3&0100&40&2 \\
$\SL_9/\mu_3$&3&$\wedge^3$&84&2&$\Sp_4$&5&11&12&1\\
$\Spin_{16}/\mu_2$&2&half-spin&128&4&$\SL_4$&$p$ odd&$01p^e$, $e \ge 1$&24&1 \\
&&&&&$\SL_4/\mu_2$&2&$012^e$, $e \ge 2$&24&1
\end{tabular}
\caption{Irreducible and faithful representations $V$ with restricted highest weight of simple $G$ with $\dim V > \dim G$ that are not generically free for $\g$, up to graph automorphism.  For each, the stabilizer $\g_v$ of a generic $v \in V$ is a toral subalgebra.  The weights on the right side are numbered as in Table \ref{luebeck.table}.} \label{irred.nvfree}
\end{table}

\begin{table}[htbp]
\begin{tabular}{cccr||cccr}
$G$&$\car k$&$V$&$\dim V$&$G$&$\car k$&$V$&$\dim V$\\ \hline\hline
$A_1$&$\ne 2, 3$&$S^3$&4&$A_2$&$\ne 2, 3$&$S^3$&10 \\
$A_1$&$\ne 2, 3$&$S^4$&5&$A_3$&$\ne 2$&$L(2\omega_2)$&19 or 20 \\
$A_8$&$\ne 3$&$\wedge^3$&84&$A_7$&$\ne 2$&$\wedge^4$&70 \\
$A_3$&3&$L(\omega_1+\omega_2)$&16&$A_\ell$&$p \ne 0$&$L(\omega_1 + p^i \omega_\ell)$,&$(\ell+1)^2$ \\
&&&&&&$L(\omega_1 + p^i \omega_1)$ \\ 
$B_\ell$ ($\ell \ge 2$)&$\ne 2$&$L(2\omega_\ell)$& $2\ell^2 - 3\ell - \eps$&$C_4$&$\ne 2$&``spin''&41 or 42 \\
$D_\ell$ ($\ell \ge 4$)&$\ne 2$&$L(2\omega_\ell)$&$2\ell^2 + \ell -1 - \eps$ &$D_8$&$\ne 2$&half-spin&128
\end{tabular}
\caption{Irreducible faithful representations $V$ of simple $G$ with $\dim V > \dim G$ such that $G_v$ is finite \'etale and $\ne 1$ for generic $v \in V$, up to graph automorphism, adapted from \cite{GurLawther}.  The symbol $\eps$ denotes 0 or 1 depending on the value of $\car k$.} \label{ngenfree}
\end{table}

(Recall that an algebraic group $H$ is \emph{finite \'etale} if $\dim H = 0$ and $\Lie(H) = 0$.)

The remaining cases of the theorem that need to be covered in this paper are those where $\car k$ is \emph{special}\footnote{This choice of vocabulary imitates \cite{St:rep}; we have written instead the more illuminating ``extremely bad characteristic'' in the title.  The hypothesis ``$\car k$ special'' is properly more restrictive than ``$\car k$ very bad'', in that 2 is very bad but not special (i.e., not extremely bad) for type $G_2$.}  for $G$, meaning that $G$ has type $G_2$ and $\car k = 3$ or $G$ has type $B_n$ ($n \ge 2$), $C_n$ ($n \ge 2$), or $F_4$ and $\car k = 2$.  These are the cases where the Dynkin diagram of $G$ has a multiple bond of valence $\car k$.  Equivalently, these are the cases where $G$ possesses so called ``special'' isogenies, which are neither central nor the Frobenius, cf.~\cite[\S3]{BoTi:hom}.

In a future work, we combine Theorem \ref{MT.group} with the results of \cite{GurLawther} to prove the existence of a stabilizer in general position for every action of a simple algebraic group on an irreducible representation.

A special case of Theorem \ref{MT.group} is the following:

\begin{specthm} \label{MT.special}
Let $G$ be a simple linear algebraic group over an algebraically closed field $k$ such that $\car k$ is special, and let $\rho \!: G \to \GL(V)$ be an irreducible and faithful representation.  Then $V$ is generically free for $\g$ if and only if $\dim V > \dim G$.
\end{specthm}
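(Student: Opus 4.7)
The plan is to derive Theorem~\ref{MT.special} as a direct consequence of part~(\ref{MT.group.et}) of Theorem~\ref{MT.group}. First I would establish the general equivalence: for any $v \in V$, the scheme-theoretic stabilizer $G_v \subseteq G$ has Lie algebra $\Lie(G_v) = \g_v$, since over the dual numbers $k[\epsilon]$ the element $1 + \epsilon x$ lies in $G_v(k[\epsilon])$ precisely when $x \cdot v = 0$. For any finite-type group scheme $H$ over $k$ one has $\dim H \le \dim \Lie(H)$, with equality iff $H$ is smooth, so $\g_v = 0$ forces $G_v$ to be $0$-dimensional with trivial tangent space at the identity, hence finite étale; the converse is immediate. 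Therefore ``$V$ is generically free for $\g$'' is equivalent to ``$G_v$ is finite étale for generic $v$'', which by Theorem~\ref{MT.group}(\ref{MT.group.et}) is equivalent to $\dim V > \dim G$ together with $(G,V) \notin$ Table~\ref{irred.nvfree}.

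With this reduction in hand, the theorem amounts to checking that in extremely bad characteristic, Table~\ref{irred.nvfree} contains no pair $(G, V)$ with $V$ faithful irreducible and $\dim V > \dim G$. I would carry this out by direct case analysis. Enumerate the special-characteristic cases---$G$ of type $G_2$ with $\car k = 3$, and $G$ of type $B_n$, $C_n$ ($n \ge 2$), or $F_4$ with $\car k = 2$---and for each list the faithful irreducible representations $V$ (i.e., those not factoring through the special isogeny) satisfying $\dim V > \dim G$. For each remaining candidate, construct an explicit vector $v \in V$ with $\g_v = 0$ (by openness of the condition, this already certifies generic freeness for $\g$), or else rule it out of the table by a structural argument exploiting the special isogeny and the induced filtration on $\g$.

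The main obstacle is the case analysis itself. In extremely bad characteristic, $\g$ contains a nontrivial ideal---the Lie algebra of the kernel of the special isogeny---and fails to be simple, so the generic-dimension bounds available in good characteristic (as used in \cite{GG:large}) do not apply directly. Consequently the naive expectation that ``$\dim V > \dim G$ suffices'' is non-obvious here, and verifying it requires careful bookkeeping with the long-root and short-root parts of $\g$ and their separate actions on $V$. I would expect the $F_4$ case to be the central difficulty: $G_2$ in characteristic $3$ has only a handful of faithful irreducibles with $\dim V > 14$ to inspect, and the types $B_n$ and $C_n$ can be treated partly in parallel via the special isogeny $B_n \leftrightarrow C_n$, but $F_4$ is self-dual under its isogeny and admits no simpler partner to reduce to, so the most delicate explicit computations are likely to be there.
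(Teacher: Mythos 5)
Your first step is circular in the context of this paper: you propose to deduce Theorem~\ref{MT.special} from Theorem~\ref{MT.group}\eqref{MT.group.et}, but the proof of Theorem~\ref{MT.group}\eqref{MT.group.et} given in \S\ref{MT.group.sec} rests precisely on Theorem~\ref{MT.special} (it invokes Theorem~A of \cite{GG:irred}, whose special-characteristic case \emph{is} Theorem~\ref{MT.special}). The equivalence you set up between ``$V$ is generically free for $\g$'' and ``$G_v$ is finite \'etale for generic $v$'' is correct and is the same translation the paper uses, but it cannot discharge the theorem, because the non-appearance of special-characteristic faithful irreducibles with $\dim V > \dim G$ in Table~\ref{irred.nvfree} is exactly the content that has to be proved, not an input.

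Your second paragraph then reduces to ``enumerate and check,'' which restates the problem rather than solving it; the substantive ideas of the actual proof are absent. Concretely, the paper first splits the highest weight $\la = \la_0 + p\la_1$ with $\la_0$ restricted. The restricted case is Theorem~\ref{MT.restricted}, proved type by type: the key tools are the Steinberg factorization $L(\la) \cong L(\la_\ell)\ot L(\la_s)$ relative to the very special isogeny, the Long and Short Root Propositions (\ref{glong.prop}, \ref{gshort.prop}) which bound generic stabilizers using the subgroups generated by long and short root subgroups, the Heisenberg structure of $\gn$ for type $B$ (\S\ref{heis.sec}), and the explicit dimension thresholds of Proposition~\ref{FG.dim}, Lemma~\ref{B.faithful}, and Proposition~\ref{C.faithful}, supplemented by L\"ubeck's tables and a few computer checks for small modules. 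The non-restricted case ($\la_1 \ne 0$) is then handled by writing $V \cong L(\la_0)\ot L(p\la_1)$ and using that $X \ot X^{[p]^e}$ is always virtually free, together with lower bounds on $\dim L(\la_0)\cdot\dim L(\la_1)$ that feed back into the same dimension criteria. None of this machinery appears in your plan; without it the ``case analysis'' cannot be carried out. (Also, your guess that $F_4$ is the delicate case is off: $F_4$ and $G_2$ are dispatched quickly by Proposition~\ref{FG.dim}, and the genuinely intricate work is in types $B_n$ and $C_n$, where $\gn$ is a Heisenberg algebra or $[\so_{2n},\so_{2n}]$ and the spin representations of $\Sp_{2n}$ force the exceptions recorded in Table~\ref{irred.vfree}.)
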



\subsection*{Large, possibly reducible representations}
In \cite{GG:large}, we proved a general bound when $G$ is simple and $\car k$ is not special: if $V^{[\g,\g]} = 0$ and $\dim V$ is big enough, then $\g$ acts virtually freely on $V$.  
However, Example \ref{large.eg} shows that such a result does not hold when $\car k$ is special.  Because the true results are of a varied nature, we do not include a summary statement here; see 
Proposition \ref{FG.dim}, Corollary \ref{B.quo},  and Proposition \ref{C.faithful} for precise statements.
Note that these results have no requirements that $G$ acts irreducibly or faithfully.  Roughly speaking, for $\n$ the Lie algebra of the kernel of the very special isogeny as in \S\ref{struct}, we give results for those $V$ on which $\n$ acts as zero ($\n V = 0$) or without fixed points ($V^\n = 0$).

\subsection*{Irreducible representations}
Recall that every irreducible representation $V$ of $G$ has a highest weight $\la$.  Write $\la$ as a sum $\la = \sum_\omega c_\omega \omega$ where the sum runs over the fundamental dominant weights $\omega$.  Then $\la$ is restricted if $p := \car k \ne 0$ and $0 \le c_\omega < p$ for all $\omega$. (In case $\car k = 0$, all dominant weights are, by definition, restricted.)  

Our next result is a variation on Theorem \ref{MT.special}, where we add the hypothesis that the highest weight of $\rho$ is restricted and drop the hypothesis that $\rho$ is faithful.  Regardless of whether $\rho$ is faithful, the stabilizer $\g_v$ of a generic $v \in V$ contains $\ker \drho$; we say that $\g$ acts \emph{virtually freely} on $V$ if $\g_v = \ker \drho$; this is the natural generalization of the notion of ``generically free'' to include the case where $\rho$ need not be faithful.

\begin{specthm} \label{MT.restricted}
Let $G$ be a simple linear algebraic group over an algebraically closed field $k$ such that $\car k$ is special for $G$.  Let $\rho \!: G \to \GL(V)$ be a nontrivial irreducible representation for $G$ with a restricted highest weight.  Then $\g$ acts virtually freely on $V$ if and only if $\dim V > \dim G$, except for those cases where $(G, V, k)$ appears in Table \ref{irred.vfree}.
\end{specthm}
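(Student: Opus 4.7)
The plan is to reduce Theorem~\ref{MT.restricted} to Theorem~\ref{MT.special} by isolating the restricted irreducible representations to which Theorem~\ref{MT.special} does not apply---namely the non-faithful ones---and analyzing each. If $\rho$ is faithful then $\ker\drho = 0$, so ``virtually free'' and ``generically free'' coincide, and Theorem~\ref{MT.special} yields the $\dim V > \dim G$ equivalence with no exceptions. The substantive content is therefore the non-faithful case.

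Suppose $\rho$ is nontrivial, restricted, and non-faithful. Then $\ker\rho$ is a proper normal subgroup scheme of $G$ which, by restrictedness of $\la$, is not a Frobenius kernel. In special characteristic the structure theory of simple groups leaves only two sources for such a kernel, possibly combined: it may be contained in the center $Z(G)$, which is nontrivial only in types $B_n$ and $C_n$, or it may contain the kernel $N$ of the very special isogeny $\pi\colon G\to \Gb$ of \S\ref{struct}, whose Lie algebra $\n$ is infinitesimal. In either case we factor $\rho = \bar\rho\circ q$ with $q$ an isogeny and $\bar\rho$ faithful on $\Gb$; then $\drho(\g) = \mathrm{d}\bar\rho(\mathrm{d}q(\g))$, and virtual freeness of $\g$ on $V$ is equivalent to generic freeness of $\mathrm{d}q(\g) \subseteq \gb$ on $V$ via $\bar\rho$.

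For $q$ central, $\mathrm{d}q(\g)$ has codimension $\dim\Lie Z$ in $\gb$; if Theorem~\ref{MT.special} yields generic freeness of $\Gb$ on $V$ (which requires $\dim V > \dim\Gb = \dim G$), then a fortiori the sub-action of $\mathrm{d}q(\g)$ is generically free and $\rho$ is virtually free for $\g$. The converse direction for $\dim V \le \dim G$ and any accidental exceptions are read off from the explicit list of non-generically-free faithful cases supplied by Theorem~\ref{MT.special}, together with a direct dimension check on $\mathrm{d}q(\g)$ as a subalgebra of $\gb$.

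The main obstacle is the case $q = \pi$. Here $\mathrm{d}\pi(\g)$ is a proper subalgebra of $\gb$ of dimension $\dim G - \dim\n$, identified in \S\ref{struct} with the subalgebra generated by one root-length family---concretely $\glong$ or $\gshort$. Virtual freeness of $\rho$ then amounts to generic freeness of this strictly smaller subalgebra acting on an irreducible $\Gb$-module, a finer condition than Theorem~\ref{MT.special} controls. I would invoke Proposition~\ref{FG.dim}, Corollary~\ref{B.quo}, and Proposition~\ref{C.faithful} to bound the $\mathrm{d}\pi(\g)$-stabilizer of a generic vector in terms of weight-space data, and compare with $\dim V$. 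The exceptional entries of Table~\ref{irred.vfree} should emerge precisely where this finer bound fails: small-rank modules whose weights concentrate on a single root length, or where the root-length subalgebra retains a large fixed locus on $V$. These accidents must be isolated and verified by explicit weight-multiplicity computations to yield the stated exception list.
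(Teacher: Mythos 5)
Your reduction of the faithful case to Theorem~\ref{MT.special} is circular within this paper: the paper's proof of Theorem~\ref{MT.special} (\S\ref{MT.special.sec}) begins by disposing of the restricted case with the words ``otherwise we are done by Theorem~\ref{MT.restricted}.'' So the faithful restricted case is exactly the content you would need to supply, and the paper does so directly, type by type: for $B_n$ via the factorization $L(\la)\cong L(\omega_1)\ot W$ (Example~\ref{B.spin.tensor}) together with Lemma~\ref{B.faithful} and Corollary~\ref{B.irr1}, which rest on the spin-representation results of part I, not on Theorem~\ref{MT.special}; and similarly for $F_4$, $G_2$, and $C_n$ via Propositions~\ref{FG.dim} and \ref{C.faithful} plus an enumeration of the small modules from L\"ubeck's tables.

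The second, larger gap is in the non-faithful case, which is where Table~\ref{irred.vfree} actually comes from. Deciding whether $\mathrm{d}q(\g)$ acts with trivial generic stabilizer inside $\gb$ when $\dim V\le\dim G$ is not a ``direct dimension check'': one must compute the generic stabilizer in $\gb$ explicitly and intersect it with the image subalgebra. For instance, for the spin representation pulled back to $\Sp_{2n}$ ($3\le n\le 6$), the paper's Example~\ref{C.spin} uses that the generic stabilizers in $\Spin_{2n+1}$ are $G_2$, $\Spin_7$, $\SL_5\rtimes\Z/2$, $(\SL_3\times\SL_3)\rtimes\Z/2$, whose Lie algebras are sums of simples and hence cannot meet the \emph{solvable} image of $\sp_{2n}$; and the split verdict between $\Sp_8$ and $\PSp_8$ in Table~\ref{irred.vfree} comes from the delicate computation in Example~\ref{B3} showing $\n_v=0$ but $\m_v=\z(\spin_7)\ne 0$. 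Likewise the ``only if'' direction for $\dim V\le\dim G$ requires exhibiting nonzero generic stabilizers for specific small modules ($L(\omega_{n-1})$ in Examples~\ref{B.adj} and \ref{C.wedge2}, the natural modules, etc.), and several remaining cases ($\omega_{n-2}$ for $n=4,5$; $\omega_2+\omega_3$ for $B_3$; $\omega_1+\omega_3$ for $\PSp_6$) are settled only by machine computation. Your proposal defers all of this to unspecified ``weight-multiplicity computations,'' but weight multiplicities alone do not determine these stabilizers, so the exceptional list is not actually derived.
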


\begin{table}[bth]
\begin{tabular}{cccrr} 
$G$&$\car k$&$V$&$\dim V$&$\dim \ker \drho$ \\ \hline\hline
$\Sp_6$&2&spin&8&14 \\
$\Sp_8$ (but not $\PSp_8$)&2&spin&16&27\\
$\Sp_{10}$&2&spin&32&44\\
$\Sp_{12}$ or $\PSp_{12}$&2&spin&64&65
\end{tabular}
\caption{The nontrivial restricted irreducible representations of simple $G$ with $\dim V \le \dim G$ that are virtually free for $\g$.} \label{irred.vfree}
\end{table}

We remark that, in the setting of Theorem \ref{MT.restricted} and on the level of abstract groups, $G_v(k)$ is always finite when $\dim V > \dim G$ by \cite{GurLawther}.

\smallskip

The organization of the paper is as follows.
We first (\S\ref{struct}) recall properties of $\g$ and the irreducible representations of $G$, focusing on the case of special characteristic.  A short section (\S\ref{back.sec}) then recalls results used to constrain the size of $\g_v$ for generic $v \in V$.  In sections \ref{long.sec} and \ref{short.sec}, we prove some results on generic stabilizers by leveraging the Lie algebras of the long and short root subgroups. The next several sections are devoted to groups by type, each under the assumption that $\car k$ is special: $F_4$ and $G_2$ in \S\ref{FG.sec}, $B_n$ in \S\ref{B.sec}, and $C_n$ in \S\ref{C.sec}.  In each section, we prove that, under various hypotheses on the representation $V$, if $\dim V$ is large enough, then $\g$ acts virtually freely on $V$.  We prove Theorem \ref{MT.restricted} for each group in its section.  The results based on $\dim V$ are far from uniform, so we provide in \S\ref{eg.sec} an example to show that the uniform result from \cite{GG:large} is false if one drops the hypothesis that $\car k$ is not special.  It remains to prove Theorem \ref{MT.special} in case the highest weight is not restricted, which we do in \S\ref{MT.special.sec}.  Finally, we prove Theorem \ref{MT.group} in \S\ref{MT.group.sec}.

\begin{table}[bth]
{\centering\noindent\makebox[450pt]{
\begin{tabular}[c]{p{2.2in}|p{3in}}

${(B_\ell)~~}$
\begin{picture}(7,2)(0,0)
\put(0,1){\circle*{3}}
\put(0,0){\line(1,0){20}}
\put(0,2){\line(1,0){20}}
\put(20,1){\circle*{3}}
\put(20,1){\line(1,0){20}}
\put(40,1){\circle*{3}}
\put(40,-1.6){ \mbox{$\cdots$}}
\put(62,1){\circle*{3}}
\put(62,1){\line(1,0){20}}
\put(82,1){\circle*{3}}
\put(82,1){\line(1,0){20}}
\put(9,-1){{\small\mbox{$<$}}}
\put(102,1){\circle*{3}}

\put(-2,-7){\mbox{\tiny $1$}}
\put(18,-7){\mbox{\tiny $2$}}
\put(38,-7){\mbox{\tiny $3$}}
\put(54,-7){\mbox{\tiny $\ell$$-$$2$}}
\put(75,-7){\mbox{\tiny $\ell$$-$$1$}}
\put(100,-7){\mbox{\tiny $\ell$}}
\end{picture}
\vspace{0.5cm}

&

${(C_\ell)~~}$
\begin{picture}(7,2)(0,0)
\put(0,1){\circle*{3}}
\put(0,0){\line(1,0){20}}
\put(0,2){\line(1,0){20}}
\put(20,1){\circle*{3}}
\put(20,1){\line(1,0){20}}
\put(40,1){\circle*{3}}
\put(40,-1.6){ \mbox{$\cdots$}}
\put(62,1){\circle*{3}}
\put(62,1){\line(1,0){20}}
\put(82,1){\circle*{3}}
\put(82,1){\line(1,0){20}}
\put(9,-1){{\small\mbox{$>$}}}
\put(102,1){\circle*{3}}

\put(-2,-7){\mbox{\tiny $1$}}
\put(18,-7){\mbox{\tiny $2$}}
\put(38,-7){\mbox{\tiny $3$}}
\put(54,-7){\mbox{\tiny $\ell$$-$$2$}}
\put(75,-7){\mbox{\tiny $\ell$$-$$1$}}
\put(100,-7){\mbox{\tiny $\ell$}}
\end{picture}
\vspace{0.5cm}
\\

${(F_4)~~}$
\begin{picture}(7,2)(0,0)
\put(0,1){\circle*{3}}
\put(0,1){\line(1,0){15}}
\put(15,1){\circle*{3}}
\put(15,0){\line(1,0){15}}
\put(15,2){\line(1,0){15}}
\put(19,-1){{\small\mbox{$>$}}}
\put(30,1){\circle*{3}}
\put(30,1){\line(1,0){15}}
\put(45,1){\circle*{3}}

\put(-2,-7){\mbox{\tiny $1$}}
\put(13,-7){\mbox{\tiny $2$}}
\put(28,-7){\mbox{\tiny $3$}}
\put(43,-7){\mbox{\tiny $4$}}
\end{picture} &
${(G_2)~~}$
\begin{picture}(7,2)(0,0)
\put(2,1){\circle*{3}}
\put(2,0.1){\line(1,0){15}}
\put(2,1.1){\line(1,0){15}}
\put(2,2.1){\line(1,0){15}}
\put(6,-1){{\small\mbox{$>$}}}
\put(17,1){\circle*{3}}

\put(0,-7){\mbox{\tiny $1$}}
\put(15,-7){\mbox{\tiny $2$}}
\end{picture}

\end{tabular}
}}
\caption{Dynkin diagrams of the non-simply-laced simple root systems, with simple roots numbered as in \cite{luebeck}.} \label{luebeck.table}
\end{table}

\subsection*{Acknowledgements} 
The referees deserve thanks for their detailed and helpful comments on earlier versions of this paper.
We thank Brian Conrad for his advice on group schemes.
Guralnick was partially supported by NSF grant DMS-1600056.

\section{Structure of $\g$ and $V$} \label{struct}

\subsection*{Structure of $\g$}
We refer to \cite{Hiss}, \cite{Hogeweij}, or \cite[\S1]{Pink:cpt} for properties of $\g := \Lie(G)$ when $G$ is simple. For example, when $G$ is simply connected,  we have: (1) \emph{$\g / \z(\g)$ is a reducible $G$-module if and only if $\car k$ is special}, and (2) \emph{$\g$ has a unique proper maximal $G$-submodule}, which we denote by $\m$.  Statement (2) can be seen by direct computation or because $\g$ is a Weyl module of $G$ in the sense of \cite{Jantzen}, the one whose highest weight is the highest root.

Supposing now that $\car k$ is special for $G$ and $\pi$ is the very special isogeny, we put $N := \ker \pi$ and $\gn := \ker \dpi$.  See \cite[\S7.1]{CGP2} or \cite[\S10]{St:rep} for a concrete description of $\pi$.  
(We note that $\gn$ is the subalgebra in $\g$ generated by the short root subalgebras, and it need not contain the center, e.g., in case $G = \Sp_{2\ell}$ with odd $\ell \ge 3$.)  As $N$ is normal in $G$, It follows that the subspace
\[
V^{\gn} := \{ v \in V \mid \text{$\drho(x)v = 0$ for all $x \in \gn$} \}
\]
is a $G$-invariant submodule of $V$.  

Examining the tables in \cite{Hiss} and \cite{Hogeweij}, we find the following:
\begin{lem} \label{g.submod}
Let $G$ be a simple and simply connected split algebraic group over a field $k$ whose characteristic is special for $G$.  If $\lsub$ is a $G$-invariant submodule of $\g$ then $\lsub \supseteq \gn$ or $\lsub \subseteq \z(\g)$. $\hfill\qed$
\end{lem}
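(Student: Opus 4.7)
The plan is to verify Lemma~\ref{g.submod} by direct inspection of the explicit lattice of $G$-submodules of $\g$ tabulated in the references of Hiss and Hogeweij, as the paper itself already flags. Since the hypothesis restricts attention to the four families where $\car k$ is special — namely $G_2$ in characteristic $3$ and $B_n$, $C_n$, $F_4$ in characteristic $2$ — only a short list of cases needs to be checked, and for each one the lattice of $G$-submodules is already recorded in the literature.

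First I would fix, for each pair $(G, \car k)$, the two canonical ideals of $\g$: the center $\z(\g)$, and the ideal $\gn = \ker \dpi$ generated by the short root subalgebras. Together these give a distinguished filtration $\z(\g) \cap \gn \subseteq \z(\g), \gn \subseteq \gn + \z(\g) \subseteq \g$, and the composition factors of $\g$ split into three natural pieces: a central part supported in $\z(\g)$, a short root part contributing $\gn/(\gn \cap \z(\g))$, and a long root part giving $\g/(\gn + \z(\g))$. Using that $G$ is simply connected, each of these quotients is irreducible (as a Weyl module/cocycle argument, or by direct inspection of the tables).

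Second, in each case I would read off from the tables that the submodule lattice is essentially a chain: the only nonzero proper $G$-submodules of $\g$ are those that appear between $\z(\g) \cap \gn$ and $\z(\g) + \gn$, together possibly with $\gn$ itself or the image of $\z(\g)$, and in each case every such submodule is either contained in $\z(\g)$ or contains $\gn$. Since $\g/(\z(\g) + \gn)$ is irreducible, any proper submodule must lie inside $\z(\g) + \gn$, which reduces the check to verifying that the three pieces described above do not break up any further as $G$-modules. That is exactly the content recorded in the tables.

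The main obstacle is bookkeeping rather than mathematics: the center $\z(\g)$ varies in dimension with the parity of $n$ for types $B_n$ and $C_n$ in characteristic $2$, and one has to be careful that in the simply connected form no part of $\z(\g)$ accidentally meets $\gn$ nontrivially in a way that produces a stray submodule. Once this is tracked correctly — and it is, in the Hiss and Hogeweij tables — the dichotomy $\lsub \subseteq \z(\g)$ or $\lsub \supseteq \gn$ falls out immediately for every special-characteristic case.
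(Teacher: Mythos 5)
Your proposal is essentially the paper's own argument: the lemma is established by direct inspection of the $G$-submodule lattices of $\g$ tabulated in Hiss and Hogeweij for the finitely many special-characteristic cases, which is exactly what you describe. One small caveat in your scaffolding: for type $C_n$ with $n$ even one has $\z(\g)\subseteq\gn$ and $\g/(\z(\g)+\gn)=\g/\gn$ is \emph{not} irreducible (the maximal ideal $\m=\so_{2n}$ properly contains $\gn$), but since $\m\supseteq\gn$ this does not disturb the dichotomy, and the table lookup you rely on handles it correctly.
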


\subsection*{Irreducible representations of $G$ when $\car k \ne 0$} 
Fix a pinning for $G$, which includes the data of a maximal torus $T$ and a choice of simple roots $\Delta$.  Then irreducible representations $\rho \!: G \to \GL(V)$ (up to equivalence) are in bijection with the set of dominant weights $\la \in T^*$, i.e., those $\la$ such that $\qform{ \la, \delta^\vee} \ge 0$ for all $\delta \in \Delta$.

Supposing now that $p := \car k \ne 0$.  Write $\la = \la_0 + p^r \la_1$, where $\la_0 = \sum_\omega c_\omega \omega$ and $0 \le c_\omega < p^r$ for all $\omega$.  If $\la_0$ and $p^{r-1} \la_1$ belong to $T^*$ (e.g., if $G$ is simply connected), then $L(\la) \cong L(\la_0) \ot L(p^{r-1} \la_1)^{[p]}$ \cite[II.3.16]{Jantzen}, the tensor product of $L(\la_0)$ and a Frobenius twist of $L(p^{r-1} \la_1)$.  As a representation of $\g$ (forgetting about the action of $G(k)$), this is the direct sum of $\dim L(\la_1)$ copies of $L(\la_0)$.

\subsection*{Irreducible representations of $G$ when $\car k$ is special}
Now suppose that $\car k$ is special for $G$, so in particular $\Delta$ has two root lengths.  Write a dominant weight $\la$ as  $\la = \sum c_\delta \omega_\delta$, where $c_\delta \ge 0$ and $\omega_\delta$ is the fundamental weight dual to $\delta^\vee$ for $\delta \in \Delta$.  We write $\la = \la_s + \la_\ell$ where $\la_s = \sum_{\text{$\delta$ short}} c_\delta \omega_\delta$ and $\la_\ell = \sum_{\text{$\delta$ long}} c_\delta \omega_\delta$, i.e., $\qform{\la_s, \delta^\vee} = 0$ for $\delta$ long and $\qform{\la_\ell, \delta^\vee} = 0$ for $\delta$ short.  Steinberg \cite{St:rep} shows that, when $G$ is simply connected, $L(\la) \cong L(\la_\ell) \ot L(\la_s)$ and that furthermore $L(\la_\ell)$ factors through the very special isogeny.

Suppose now that $\la$ is restricted.  Then $L(\la_s)$ is irreducible as a representation of $\gn$ \cite[p.~52]{St:rep}, so Lemma \ref{g.submod} shows that the kernel of this representation is contained in $\z(\g)$.  Similarly, as an $\gn$-module, $L(\la)$ is a direct sum of $\dim L(\la_\ell)$ copies of $L(\la_s)$, and again the kernel of the representation is contained in $\z(\g)$.

In summary, for $\la$ restricted and $G$ simply connected, we have either (1) $\la_s = 0$ and $\ker \drho \supseteq \gn$, or (2) $\la_s \ne 0$ and $\ker \drho \subseteq \z(\g)$.

\section{Lemmas for computing $\g_v$} \label{back.sec}

Choose a representation $\rho \!: G \to \GL(V)$.  For $x \in \lie$, put
\[
V^x := \{ v \in V \mid \drho(x)v = 0 \}
\]
and $x^G$ for the $G$-conjugacy class $\Ad(G)x$ of $x$.  Recall the following from \cite[\S1]{GG:large}.

\begin{lem} \label{ineq} 
For $x \in \g$, 
\begin{equation} \label{mother0}
x^G \cap \g_v = \emptyset \quad \text{for generic $v \in V$}
\end{equation}
is implied by:
\begin{equation} \label{ineq.mother}
  \dim x^G + \dim V^x < \dim V,
\end{equation}
which is implied by:
\begin{equation} \label{ineq.gen}
\parbox{4.25in}{{There exist $e > 0$ and $x_1, \ldots x_e \in x^G$ such that 
the subalgebra $\mathfrak{s}$ of $\g$ generated by $x_1, \ldots, x_e$ has $V^{\mathfrak{s}} = 0$ and
$e\cdot \dim x^G < \dim V$.}}
\end{equation}
\end{lem}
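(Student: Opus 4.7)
The plan is to establish the two implications in sequence, with a standard incidence-variety dimension count for the first and a codimension estimate for the intersection of fixed spaces for the second.

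For the implication $(\ref{ineq.mother}) \Rightarrow (\ref{mother0})$, I would form the incidence variety
\[
Z := \{ (y, v) \in x^G \times V \mid \drho(y)v = 0 \}
\]
and consider the two projections $Z \to x^G$ and $Z \to V$. For each $y \in x^G$, choose $g \in G$ with $y = \Ad(g)x$; then $\drho(y) = \rho(g)\,\drho(x)\,\rho(g)^{-1}$, so the fiber over $y$ is $\rho(g)(V^x)$, a subspace of the same dimension as $V^x$. Hence $Z$ is a bundle over $x^G$ with fibers of constant dimension, giving $\dim Z = \dim x^G + \dim V^x$. The projection to $V$ sends $Z$ onto the constructible set of $v$ with $x^G \cap \g_v \ne \emptyset$, and hypothesis $(\ref{ineq.mother})$ forces this image to have dimension strictly less than $\dim V$. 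Its complement then contains a dense open subset of $V$, proving $(\ref{mother0})$.

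For the implication $(\ref{ineq.gen}) \Rightarrow (\ref{ineq.mother})$, the key input is the elementary codimension inequality for subspaces of $V$: if $W_1, \ldots, W_e \subseteq V$ are linear subspaces, then
\[
\codim\Bigl( \bigcap_{i=1}^{e} W_i \Bigr) \ \le \ \sum_{i=1}^{e} \codim(W_i).
\]
Apply this to $W_i := V^{x_i}$. Since each $x_i$ is $G$-conjugate to $x$, we have $\dim W_i = \dim V^x$ by the bundle observation above. The hypothesis $V^\s = 0$ gives $\bigcap_i W_i = 0$, so the inequality becomes $\dim V \le e\,(\dim V - \dim V^x)$, which rearranges to $e \ge \dim V / (\dim V - \dim V^x)$. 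Combining with $e \cdot \dim x^G < \dim V$ then yields
\[
\dim x^G \ < \ \frac{\dim V}{e} \ \le \ \dim V - \dim V^x,
\]
which is exactly $(\ref{ineq.mother})$.

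There is no serious obstacle: the argument only uses that $\Ad$ is a morphism (so $x^G$ is locally closed of well-defined dimension) and the two standard facts above. The mildly subtle point worth double-checking is that the fibers of $Z \to x^G$ really do have \emph{constant} dimension $\dim V^x$ rather than merely upper-semicontinuous dimension, which is immediate from the conjugacy and the intertwining identity $\drho(\Ad(g)x) = \rho(g)\,\drho(x)\,\rho(g)^{-1}$.
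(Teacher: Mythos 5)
Your proof is correct and is essentially the argument the paper relies on (the lemma is quoted from \cite[\S1]{GG:large}, where it is proved by exactly this incidence-variety dimension count for the first implication and the codimension inequality $\codim(\bigcap V^{x_i}) \le \sum \codim V^{x_i}$ together with $V^{\s} = \bigcap_i V^{x_i}$ for the second). Both steps, including the constancy of the fiber dimension via $\drho(\Ad(g)x) = \rho(g)\,\drho(x)\,\rho(g)^{-1}$, are handled correctly.
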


We use this as follows.  Choose a subalgebra $\lsub$ of $\g$.  To verify that $\g_v \subseteq \lsub$ for generic $v \in V$, it suffices to verify, for all $x \in \g \setminus \lsub$, that $x \not\in \g_v$, for which we may check \eqref{ineq.mother} or \eqref{ineq.gen}. 

For us, $\g = \Lie(G)$ and $\car k = p \ne 0$, so the Frobenius morphism on $G$ induces a $p$-operation $x \mapsto x^{[p]}$ on $\g$, see \cite{StradeF} for properties.  When $G$ is a sub-group-scheme of $\GL_n$ and $x \in \g$, the element $x^{[p]}$ is the $p$-th power of $x$ with respect to the typical, associative multiplication for $n$-by-$n$ matrices, see \cite[\S{II.7}, p.~274]{DG}.  

An element $x \in \g$ is \emph{nilpotent} if $x^{[p]^n} = 0$ for some $n > 0$, \emph{toral} if $x^{[p]} = x$, and \emph{semisimple} if $x$ is contained in the Lie $p$-subalgebra of $\g$ generated by $x^{[p]}$, cf.~\cite[\S2.3]{StradeF}.  We recall from part I:

\begin{lem} \label{ineq.spin}
Suppose $G$ is semisimple over an algebraically closed field $k$ of characteristic $p > 0$, and  let $\lsub$ be a $G$-invariant subspace of $\lie$.  
\begin{enumerate}
\item \label{mother.lie2} If inequality \eqref{ineq.mother} holds for every toral or nilpotent $x \in \g \setminus \lsub$, then  $\g_v \subseteq \lsub$ for generic $v \in V$.

\item \label{mother.lie} If $\lsub$ consists of semisimple elements and \eqref{ineq.mother} holds for every $x \in \g \setminus \lsub$ with $x^{[p]} \in \{ 0, x \}$, then $\g_v \subseteq \lsub$  for generic $v$ in $V$. $\hfill\qed$
\end{enumerate}
\end{lem}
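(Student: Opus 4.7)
The plan is to exploit that $\g_v$ is itself a restricted Lie subalgebra of $\g$: since $\drho(x)v = 0$ forces $\drho(x^{[p]})v = \drho(x)^p v = 0$, the subalgebra $\g_v$ is closed under the $p$-operation. Consequently, for any $x \in \g_v$, the Jordan--Chevalley decomposition $x = x_s + x_n$ in a restricted Lie algebra (with $x_s, x_n$ in the $p$-envelope of $x$; cf.~\cite{StradeF}) takes place inside $\g_v$. With this in hand, I would show that whenever $\g_v \not\subseteq \lsub$ the set $\g_v \setminus \lsub$ contains a ``simple'' witness of the type controlled by the hypothesis, and then invoke Lemma~\ref{ineq} on the finitely many $G$-orbits of such witnesses to rule out the bad locus.

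For part~(1), given $x \in \g_v \setminus \lsub$, at least one of $x_s, x_n$ lies outside the subspace $\lsub$. If $x_n \notin \lsub$, it is a nilpotent witness. If $x_s \notin \lsub$, then the restricted subalgebra generated by $x_s$ is a torus (abelian with $[p]$ bijective), contained in $\g_v$, and $k$-spanned by its toral elements; some toral element therefore lies outside $\lsub$. Thus $\g_v \setminus \lsub$ meets the union of the nilpotent cone and the toral locus. That union consists of only finitely many $G$-orbits: the nilpotent cone decomposes into finitely many orbits by standard theory, and toral elements are, up to $G$-conjugacy, contained in a fixed Cartan subalgebra $\Lie(T)$, where they form the finite $\F_p$-form $\Lie(T)(\F_p)$ permuted by the Weyl group. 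Applying \eqref{ineq.mother} and Lemma~\ref{ineq} to each orbit and intersecting the resulting dense open subsets of $V$ yields $\g_v \subseteq \lsub$ for generic $v$.

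For part~(2), I sharpen the witness using that $\lsub$ consists of semisimple elements only. Given $x \in \g_v \setminus \lsub$: if $x_n \neq 0$, choose $m$ minimal with $x_n^{[p]^m}=0$ and set $y := x_n^{[p]^{m-1}}$, so $y \in \g_v$ is a nonzero nilpotent with $y^{[p]}=0$, and $y \notin \lsub$ because $\lsub$ contains no nonzero nilpotents. If $x_n = 0$, the torus-spanning argument from part~(1) produces a toral $y \in \g_v \setminus \lsub$. Either way $y^{[p]} \in \{0, y\}$, and the same finite-union application of Lemma~\ref{ineq} closes the proof.

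The step I expect to demand the most care is the semisimple-to-toral reduction, which relies on the structural fact that a restricted torus is $k$-spanned by its $\F_p$-rational toral points, together with keeping the resulting family of toral-plus-nilpotent $G$-orbits finite so that the intersection of Lemma~\ref{ineq} dense opens remains dense.
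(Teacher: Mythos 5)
Your proposal is correct, and it is essentially the intended argument: the paper does not reprove this lemma but recalls it from part I (\cite{GG:large}), where the proof runs exactly along your lines --- $\g_v$ is a restricted subalgebra, Jordan decomposition produces a nilpotent or semisimple witness outside $\lsub$, tori are spanned by toral elements, and the finiteness of nilpotent orbits together with the finiteness of toral classes (all conjugate into $\Lie(T)(\F_p)$) lets one intersect the dense open sets from Lemma \ref{ineq}. The only point worth flagging is that your reductions correctly handle the one real subtlety, namely replacing a general semisimple or nilpotent witness by one with $x^{[p]} \in \{0,x\}$ in part (2), using that $\lsub$ contains no nonzero nilpotents.
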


One typical application of part \eqref{mother.lie} of the lemma is when $\lsub = \z(\g)$.


\section{A Heisenberg Lie algebra} \label{heis.sec}

Let $G = \Spin_{2n+1}$ over a field $k$ (always assumed algebraically closed) of characteristic 2.  The short root subalgebras of $\g$ generate a ``Heisenberg'' Lie algebra $\lsub$ of dimension $2n+1$ such that $[\lsub, \lsub]$ is the 1-dimensional center $\z(\lsub)$.  
The algebra $\lsub$ is the image of $\sl_2^{\times n}$ under a central isogeny $\SL_2^{\times n}  \to \SL_2^{\times n}/Z$ where $Z$ is isomorphic to $\mu_2^{\times (n-1)}$, and the quotient $\lsub/\z(\lsub)$ is the image of $\sl_2^{\times n} \to \pgl_2^{\times n}$.

For $G = \Sp_{2n}$ over the same $k$, the very special isogeny $\pi \!: \Sp_{2n} \to \Spin_{2n+1}$ has image $\lsub$, and so we may identify $\lsub$ with $\g / \ker \dpi$.

\begin{lem} \label{heis}
Suppose $\rho \!: G \to \GL(V)$ is a representation of $G = \Spin_{2n+1}$ or $\Sp_{2n}$.  In the latter case, assume additionally that $\drho$ vanishes on $\ker \dpi$.
\begin{enumerate}
\item \label{heis.1} If $4n + \dim V^{\z(\lsub)} < \dim V$, then $\dim x^G + \dim V^x < \dim V$ for all nonzero $x \in \lsub$.
\item \label{heis.gen} If $V^\lsub = 0$ and $4n^2 < \dim V$, then $\dim x^G + \dim V^x < \dim V$ for all noncentral $x \in \lsub$.
\end{enumerate}
\end{lem}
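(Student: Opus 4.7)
The plan is to invoke Lemma~\ref{ineq}: inequality~\eqref{ineq.mother} for part~(i) and \eqref{ineq.gen} for part~(ii). Two structural observations will drive the argument. First, $\lsub$ is $G$-stable: in the $\Sp_{2n}$ case $\lsub = \g/\ker\dpi$ is a quotient by a $G$-invariant ideal, and in the $\Spin_{2n+1}$ case $\lsub$ is the $G$-equivariant image of $\dpi$ inside $\g$. Second, $\z(\lsub) = [\lsub,\lsub]$ is a one-dimensional $G$-submodule, and since $G$ is simple it acts trivially on this line; the quotient $\bar\lsub := \lsub/\z(\lsub)$ is therefore an irreducible $G$-module of dimension $2n$, isomorphic to the natural module for $\Sp_{2n}$. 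From these I will deduce $\dim x^G = 0$ for $x \in \z(\lsub)\setminus\{0\}$ and $\dim x^G \le 2n$ for $x \in \lsub \setminus \z(\lsub)$.

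For part~(i), the central case is immediate since $\dim x^G + \dim V^x = \dim V^{\z(\lsub)} < \dim V$ by the hypothesis. For noncentral $x$, I will pick $y \in \lsub$ with $[x,y] = z$ a generator of $\z(\lsub)$. The characteristic-two identity $xy + yx = z$ on $V$ gives $zv = x(yv) \in x(V)$ for every $v \in V^x$, hence $z(V^x) \subseteq x(V)$. Applying rank-nullity to both $z|_{V^x}$ and $x|_V$ then yields
\[
\dim V^x - \dim(V^x \cap V^{\z(\lsub)}) = \dim z(V^x) \le \dim x(V) = \dim V - \dim V^x,
\]
so $\dim V^x \le \tfrac{1}{2}\bigl(\dim V + \dim V^{\z(\lsub)}\bigr)$. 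Combining this with $\dim x^G \le 2n$ gives $\dim x^G + \dim V^x < \dim V$ precisely under the hypothesis $4n + \dim V^{\z(\lsub)} < \dim V$.

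For part~(ii), I will apply \eqref{ineq.gen}. Since $\bar\lsub$ is an irreducible $G$-module, the $G$-orbit of the image $\bar x \in \bar\lsub$ of any noncentral $x$ spans $\bar\lsub$; thus there exist $e = 2n$ conjugates $x_1,\dots,x_{2n}$ of $x$ whose images form a basis of $\bar\lsub$. The non-degeneracy of the induced symplectic pairing $\bar\lsub \times \bar\lsub \to \z(\lsub)$ (coming from the Heisenberg bracket) ensures that some $[x_i,x_j]$ generates $\z(\lsub)$, so the Lie subalgebra $\s$ they generate contains $\lsub$. Hence $V^\s \subseteq V^\lsub = 0$, and the criterion reduces to $e \cdot \dim x^G \le 4n^2 < \dim V$, which is the hypothesis.

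The most delicate step will be the bound $\dim x^G \le 2n$ for noncentral $x$: naively the orbit could pick up an extra dimension from the fiber of the projection $\lsub \twoheadrightarrow \bar\lsub$, corresponding to a nontrivial character of the stabilizer $G_{\bar x}$ into $\Ga$. I expect this to be ruled out by the simplicity of $G$ and the triviality of the $G$-action on $\z(\lsub)$, but this is the point requiring the most care.
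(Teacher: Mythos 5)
Your proposal follows essentially the same route as the paper: for (\ref{heis.1}) the key point is $\dim V^x \le \tfrac12(\dim V + \dim V^{\z(\lsub)})$ for noncentral $x$ together with $\dim x^G \le 2n$, and for (\ref{heis.gen}) the key point is that $2n$ conjugates of $x$ generate a subalgebra containing $\lsub$, so that criterion \eqref{ineq.gen} applies. Your derivation of the first inequality is a small (valid) variant: the paper takes $y$ to be a conjugate $x^g$ with $[x,x^g]$ nonzero central and uses $V^x \cap V^{x^g} \subseteq V^{[x,x^g]}$ together with $\dim V^{x^g} = \dim V^x$, whereas you take an arbitrary $y$ with $[x,y]=z$ and use the operator identity $\drho(x)\drho(y)+\drho(y)\drho(x)=\drho(z)$; both give the same bound. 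Your generation argument for (\ref{heis.gen}) via irreducibility of $\lsub/\z(\lsub)$ and nondegeneracy of the induced pairing is also fine and matches the paper's use of Weyl-group transitivity on short roots.

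The one loose end is exactly the one you flagged: the bound $\dim x^G \le 2n$ for noncentral $x$. (The paper simply asserts $\dim x^G < 2n+1$ without proof, and note that $\le 2n$, not $\le 2n+1$, is what the arithmetic in (\ref{heis.1}) actually requires.) Your diagnosis of the danger is correct, but your proposed fix is not quite enough as stated: the possible extra dimension comes from a homomorphism $G_{\bar x} \to \Ga$, $g \mapsto x^g - x$, and simplicity of $G$ says nothing directly about homomorphisms out of the \emph{stabilizer} $G_{\bar x}$. Two ways to close it. (a) $G_{\bar x}$ is the derived group of the maximal parabolic stabilizing the line through $\bar x$, an extension of $\Sp_{2n-2}$ by a unipotent group whose abelianization is the natural $\Sp_{2n-2}$-module; neither piece admits a nontrivial homomorphism to $\Ga$, so $\dim G_x = \dim G_{\bar x}$ and $\dim x^G = \dim \bar{x}^G \le 2n$. (b) More directly: the map $q(x) = x^{[2]}$ sends $\lsub$ into $\z(\lsub)$ (short root elements square to zero, central elements square to central elements, and $(u+v)^{[2]} = u^{[2]}+v^{[2]}+[u,v]$ with $[\lsub,\lsub]=\z(\lsub)$), it is nonconstant (e.g.\ on $e_\alpha + e_{-\alpha}$), and it is constant on $G$-orbits because $G$ fixes $\z(\lsub)$ pointwise; hence every orbit lies in a proper fiber of $q$, of dimension at most $2n$. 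With either of these inserted, your proof is complete.
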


\begin{proof}
For $x$ nonzero central, $\dim x^G + \dim V^x = \dim V^{\z(\lsub)}$, verifying \eqref{heis.1}, so suppose $x$ is noncentral.

In case \eqref{heis.1}, there is a $g \in G(k)$ so that $[x, x^g]$ is nonzero central in $\lsub$, so $\dim V^x \le \frac12 (\dim V + \dim V^{\z(\lsub)})$.  As $\dim x^G < 2n + 1$, the claimed inequality follows.

In case \eqref{heis.gen}, the Weyl group of $G$ acts transitively on groups of a  given length, so $2n$ conjugates of $x$ generate $\lsub$, and therefore to prove the claim it suffices to note that $2n \cdot \dim x^G < \dim V$ \cite[Lemma 1.1]{GG:large}.
\end{proof}

\section{Long root subgroup} \label{long.sec}

Suppose $G$ is simple and simply connected and $\car k$ is special for $G$.  Fix a maximal torus $T$ in $G$.  The long root subgroups of $G$ (relative to $T$) generate a subgroup $\Glong$ that is also simply connected and the type of $(\Glong, G)$ is one of $(A_2, G_2)$, $(D_4, F_4)$, $(D_n, B_n)$ for $n \ge 2$, or $(A_1^n, C_n)$ for $n \ge 2$, \cite[Prop.~7.1.7]{CGP2}.  We put $\glong := \Lie(\Glong)$, which as a vector space is a direct sum of $\Lie(T)$ and the long root subalgebras.

\begin{lem}  \label{glong}
In the notation of the preceding paragraph, 
\begin{enumerate}
\item \label{glong.1} The composition $\glong \to \g \to \g/\gn$ is onto.
\item \label{glong.2} $\z(\glong) + \gn = \m$, the unique maximal $G$-invariant subspace of $\g$.
\item \label{glong.3} $\glong \cap \m = \z(\glong)$.
\end{enumerate}
\end{lem}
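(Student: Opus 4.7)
For (1), the key observation is that in each of the four special types the long roots form a root subsystem of the same rank as $G$, so by \cite[Prop.~7.1.7]{CGP2} the reductive subgroup $\Glong$ contains the maximal torus $T$, which gives $\glong = \Lie(T)\oplus\bigoplus_{\alpha\text{ long}}\g_\alpha$. Since $\gn$ contains every short root subalgebra (it is generated by them), the sum $\glong+\gn$ already accounts for every root subalgebra together with $\Lie(T)$, yielding $\glong+\gn=\g$.

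For (2) I would first record that $\gn\subseteq\m$: indeed $\gn$ is a proper $G$-invariant subspace of $\g$, and by Lemma~\ref{g.submod} it cannot lie inside $\z(\g)$ (it contains the non-central short root subalgebras), so it is contained in the unique maximal $G$-invariant subspace $\m$. The plan is then to show $\z(\glong)+\gn\subseteq\m$ conceptually and then match dimensions. For the inclusion: since the highest root $\tilde\alpha$ is long, the Steinberg factorization recalled in \S\ref{struct} shows that the action of $\g$ on $L(\tilde\alpha)=\g/\m$ factors through the very special isogeny, so $\gn$ acts as zero on $\g/\m$ and $\g/\m$ becomes an irreducible module over $\g/\gn$. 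In particular the Lie-algebra center $\z(\g/\gn)$ is contained in $\m/\gn$. By (1) the image of $\glong$ in $\g/\gn$ is everything, so the image of $\z(\glong)$ is central in $\g/\gn$ and hence lies in $\m/\gn$; this gives $\z(\glong)+\gn\subseteq\m$. Equality will then follow from the dimension identity
\[
\dim(\z(\glong)+\gn)=\dim\z(\glong)+\dim\gn-\dim(\z(\glong)\cap\gn),
\]
compared with $\dim\m$ read off from the tables of \cite{Hiss} and \cite{Hogeweij} in each of the types $G_2$, $F_4$, $B_n$, $C_n$.

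Part (3) is then a short consequence: from $\glong+\gn=\g$ and $\gn\subseteq\m$ we get $\glong+\m=\g$, and inclusion--exclusion gives $\dim(\glong\cap\m)=\dim\glong+\dim\m-\dim\g$. A case-by-case computation shows this number equals $\dim\z(\glong)$; combined with the inclusion $\z(\glong)\subseteq\glong\cap\m$ supplied by (2), equality follows. The main obstacle I anticipate is the dimension bookkeeping in (2): the conceptual inclusion step is uniform across the four types, but verifying that $\dim(\z(\glong)\cap\gn)$ and $\dim\m$ take the expected values has to be carried out separately in each of $G_2$, $F_4$, $B_n$, and $C_n$ using the explicit description of $\g$ and of the kernel of the very special isogeny.
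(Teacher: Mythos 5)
Your part (1) is exactly the paper's argument. For (2) and (3), the paper takes a shorter route that avoids all dimension bookkeeping: it proves (3) \emph{first}, using the description of the $G$-submodules of $\g$ in \cite{Hiss} to see that no long root subalgebra lies in $\m$, so that $T$-invariance forces $\glong \cap \m \subseteq \Lie(T)$; then for $z \in \glong\cap\m$ and $x_\alpha$ a long root element, $[z,x_\alpha] = \alpha(z)x_\alpha \in \m \cap \g_\alpha = 0$, whence $z \in \z(\glong)$. Statement (2) then falls out of (1) by the modular law: $\g = \glong + \gn$ and $\gn \subseteq \m$ give $\m = (\glong\cap\m) + \gn$. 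Your plan of ``conceptual inclusion plus case-by-case dimension count'' can in principle be completed, but it is much heavier, and computing $\dim(\z(\glong)\cap\gn)$ and $\dim\m$ in each of the four types is not obviously easier than the statement being proved.

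More importantly, your conceptual step for the inclusion $\z(\glong) \subseteq \m$ has a genuine gap. You infer $\z(\g/\gn) \subseteq \m/\gn$ from the claim that $\g/\m = L(\tilde\alpha)$ is an irreducible $\g/\gn$-module. For type $C_n$ the highest root is twice a fundamental weight, so in characteristic $2$ the module $L(\tilde\alpha)$ is a Frobenius twist of the natural module: here $\m = [\g,\g] = \so_{2n}$ and $\g$ acts on $\g/\m$ as \emph{zero}, so the module is trivial rather than irreducible over $\g/\gn$ and the argument yields nothing. Even in the types where $\g/\m$ is irreducible as a Lie algebra module, irreducibility alone does not place the center inside $\m/\gn$: a central element could act by a nonzero scalar (as the scalars of $\gl_n$ do on $k^n$), and the fact that it acts by zero (which does hold here, since $\g/\m$ is a quotient of the adjoint module) only puts it in the kernel of the action on $\g/\m$, which you have not shown lies in $\m/\gn$ --- for type $C_n$ that kernel is all of $\g$. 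The containment $\z(\g/\gn)\subseteq\m/\gn$ is in fact true in all four types, but it must be verified directly from the structure of $\g/\gn$ (centerless simple for $G_2$ and $F_4$; $[\so_{2n},\so_{2n}]$ for $B_n$; the Heisenberg algebra, whose center equals its derived subalgebra $\m/\gn$, for $C_n$), or sidestepped entirely as in the paper's proof.
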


\begin{proof}
For \eqref{glong.1}, $\Lie(T)$ is contained in $\glong$, as is the long root subalgebra $\g_\alpha$ for every long root $\alpha$.  As $\gn$ contains $\g_\alpha$ for every short root $\alpha$, we have $\g = \glong + \gn$ as vector spaces.  Hence $\m = (\glong \cap \m) + \gn$, so \eqref{glong.3} implies \eqref{glong.2}.

For \eqref{glong.3}, the description of the $G$-submodules of $\g$ in \cite{Hiss} show that the long root subalgebras are not contained in $\m$.  As $\m$ and $\glong$ are $T$-invariant, we have $\m \cap \glong \subseteq \Lie(T)$.  For $z \in \glong \cap \m$ and $x_\alpha$ a root element in $\glong$, $[z,x_\alpha] \in \m$, so $z \in \z(\glong)$, i.e., \eqref{glong.3} holds.
\end{proof}

\begin{longroot} \label{glong.prop}
Let $G$ be a simple and simply connected algebraic group such that $\car k$ is special for $G$.  Suppose that $\rho \!: G \to \GL(V)$ vanishes on the kernel of the very special isogeny.  If $V$ has a subquotient $W$ such that $W^\g = 0$ and 
\[
\dim W > \begin{cases}
64 & \text{if $G = F_4$} \\
20 & \text{if $G = G_2$} \\
30 & \text{if $G = \Spin_7$} \\
4n^2 & \text{if $G = \Spin_{2n+1}$ with $n \ge 4$ or $G = \Sp_{2n}$ with $n \ge 2$,}
\end{cases}
\]
then $\g_v \subseteq \m$ for generic $v \in V$.
\end{longroot}

\begin{proof}
We verify, for $x \in \glong \setminus \z(\glong)$ such that $x^{[p]} \in \{0, x\}$, that
\begin{equation} \label{glong.ineq}
\dim x^{\Glong} + \dim W^x < \dim W.
\end{equation}

Note that $\Glong$ is not $\Sp_{2n}$ for any $n \ge 1$, so $[\glong, \glong] = \glong$.  Therefore, $W^{[\glong, \glong]} = W^\glong = W^\g = 0$.
For $G = G_2$, $F_4$, or $\Spin_{2n+1}$ for $n \ge 3$, we have \eqref{glong.ineq} by \cite[Th.~12.2]{GG:large}, where in case $\Spin_7$ we use the identity $\Spin_6 = \SL_4$. For $G = \Sp_{2n}$ with $n \ge 2$, we apply Lemma \ref{heis}\eqref{heis.gen}.

Because \eqref{glong.ineq} holds, we deduce that $\dim x^{\Glong} + \dim V^x < \dim V$ (elementary, see \cite[Example 2.1]{GG:large}).  As $\glong \cap \m = \z(\glong)$ consists of semisimple elements, it follows that $(\glong)_v \subset \m$ for generic $v \in V$, whence $\g_v \subset \m$.
\end{proof}

\section{Short root subgroup} \label{short.sec}

Continue the notation of the preceding section.  In particular, $G$ is assumed simply connected and $\car k$ is special for $G$.  The root subgroups in $G$ corresponding to short roots generate a subgroup $\Gshort$, and the type of $(\Gshort, G)$ is $(A_2, G_2)$, $(D_4, F_4)$, $(A_1^n, B_n)$ for $n \ge 2$, or $(D_n, C_n)$ for $n \ge 3$ \cite[Prop.~7.1.7]{CGP2}.  We put $\gshort := \Lie(\Gshort)$.

\begin{lem} \label{gshort}
$[\gshort, \gshort] = \gn$.
\end{lem}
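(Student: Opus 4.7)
The plan is to prove the equality by establishing both inclusions, with the easier one (that $[\gshort,\gshort]$ sits inside $\gn$) reducing to a definition-chase and the harder one reducing to a characteristic-$p$ check on a single coroot.

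For $[\gshort,\gshort] \subseteq \gn$: In each of the four cases for $(\Gshort, G)$, namely $(A_2, G_2)$, $(D_4, F_4)$, $(A_1^n, B_n)$, and $(D_n, C_n)$, the groups $\Gshort$ and $G$ share a maximal torus $T$ (they have equal rank), so as a vector space $\gshort = \Lie(T) \oplus \bigoplus_{\alpha\text{ short}} \g_\alpha$. The derived subalgebra is then spanned by brackets of these summands: $[\Lie(T), \Lie(T)] = 0$, brackets $[\Lie(T), \g_\alpha]$ land in $\g_\alpha \subseteq \gn$, and brackets $[\g_\alpha, \g_\beta]$ of short root subalgebras lie in $\gn$ automatically, since $\gn$ is by definition the Lie subalgebra of $\g$ generated by them.

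For $\gn \subseteq [\gshort,\gshort]$: Since $\gn$ is generated as a subalgebra by the short root subalgebras and $[\gshort,\gshort]$ is itself a Lie subalgebra, it suffices to exhibit each $\g_\alpha$ (for $\alpha$ short) inside $[\gshort,\gshort]$. I would do this by producing some $h \in \Lie(T)$ with $\alpha(h) \ne 0$ in $k$, so that $\g_\alpha = [h, \g_\alpha] \subseteq [\gshort, \gshort]$.

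The main obstacle is the characteristic-$p$ subtlety in this last step: the natural $h_\alpha$ arising from the $\sl_2$-triple for $\alpha$ satisfies $\alpha(h_\alpha) = 2$, which vanishes in $k$ precisely in the $B_n$, $C_n$, and $F_4$ cases, where $\car k = 2$. I would handle this by invoking Weyl-group transitivity on short roots to reduce to the case that $\alpha$ is a short simple root, and then choosing a simple root $\beta$ adjacent to $\alpha$ in the Dynkin diagrams of Table \ref{luebeck.table} with $\alpha(h_\beta) = \langle \alpha, \beta^\vee \rangle = \pm 1$, which is nonzero modulo both $2$ and $3$. Direct inspection of the four diagrams shows such an adjacent $\beta$ exists in every case (it is the unique neighbor of $\alpha_n$ for $B_n$; a neighbor of $\alpha_i$ for $C_n$; a neighbor of the short nodes in $F_4$; and $\alpha_2$ for $G_2$, where in fact $\alpha_1(h_{\alpha_1})=2 \ne 0$ in characteristic $3$ already).
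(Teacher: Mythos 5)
Your proof is correct, but it takes a genuinely different route from the paper's. The paper disposes of the lemma in two lines by citing \cite[Lemma 3.1]{GG:large}: since $\Gshort$ is never of type $C$, the derived subalgebra $[\gshort,\gshort]$ equals the image of $\Lie(\widetilde{\Gshort})$ for $\widetilde{\Gshort}$ the simply connected cover, and that image is precisely the subalgebra generated by the short root subalgebras, which is $\gn$. (Both your proof and the paper's rely on the identification of $\gn = \ker\dpi$ with the subalgebra generated by the short root subalgebras, which is asserted in \S\ref{struct}; it is a stated fact there rather than the definition, but it is legitimately available.) What you do instead is re-derive the relevant case of that cited lemma by hand: the containment $[\gshort,\gshort]\subseteq\gn$ is essentially tautological once one has the weight-space decomposition $\gshort=\Lie(T)\oplus\bigoplus_{\alpha\ \text{short}}\g_\alpha$ (valid because $\Gshort$ has full rank, so $T\subseteq\Gshort$), and the reverse containment reduces to producing, for a short simple root $\alpha$, some $h\in\Lie(T)$ with $d\alpha(h)\ne0$ in $k$. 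Your key point --- take $h=h_\beta$ for $\beta$ the long simple root across the multiple bond, where $\qform{\alpha,\beta^\vee}=-1$ because $\alpha$ is the short end --- is exactly what makes the argument survive in special characteristic, and it also illuminates why the hypothesis ``not $\Sp_{2n}$'' appears in the cited lemma: for the long simple root of $C_n$ every Cartan integer is even, so no such $h$ exists in characteristic $2$ and indeed $[\sp_{2n},\sp_{2n}]$ is proper. The only blemishes are cosmetic labelling slips: in the numbering of Table \ref{luebeck.table} the short simple root of $B_n$ is $\alpha_1$ (not $\alpha_n$) and that of $G_2$ is $\alpha_2$; neither affects the argument, since the relevant node is in each case an end node whose unique neighbour is long. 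In short, your version buys a self-contained, characteristic-explicit proof at the cost of length; the paper's buys brevity at the cost of an external reference.
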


\begin{proof}
Put $\widetilde{\Gshort}$ for the simply connected cover of $\Gshort$.
Because $\Gshort$ is not $\Sp_{2n}$ for any $n \ge 1$, $[\gshort, \gshort]$  is the image of $\Lie(\widetilde{\Gshort})$ in $\gshort$ \cite[Lemma 3.1]{GG:large}, and in particular it is the subalgebra generated by the root subalgebras of $\g$ corresponding to short roots, which is $\gn$.
\end{proof}

\begin{shortroot} \label{gshort.prop}
Let $G$ be a simple and simply connected algebraic group such that $\car k$ is special for $G$, and let $V$ be a representation of $G$.  If $V$ has a subquotient $W$ such that $W^\gn = 0$ and 
\[
\dim W > \begin{cases}
64 & \text{if $G = F_4$} \\
20 & \text{if $G = G_2$} \\
30 & \text{if $G = \Sp_6$} \\
4n^2 & \text{if $G = \Spin_{2n+1}$ with $n \ge 2$ or $G = \Sp_{2n}$ with $n \ge 4$,}
\end{cases}
\]
then $\gn_v \subseteq (\gshort)_v \subseteq \z(\gshort)$ for generic $v \in V$.
\end{shortroot}

\begin{proof}
We verify, for noncentral $x \in \gshort$ such that $x^{[p]} \in \{0, x\}$, that
\begin{equation} \label{gshort.ineq}
\dim x^{\Gshort} + \dim W^x < \dim W.
\end{equation}

Note that $W^{[\gshort, \gshort]} = W^\gn = 0$.  If $G = \Spin_{2n+1}$ for $n \ge 4$, we apply Lemma \ref{heis}\eqref{heis.gen}.  In the other cases, we apply \cite[Th.~12.2]{GG:large}.  These results verify \eqref{gshort.ineq} in all cases.

Then $\dim x^{\Gshort} + \dim V^x < \dim V$, so, for generic $v \in V$, it follows that $(\gshort)_v \subseteq \z(\gshort)$.
\end{proof}

\section{Type $F_4$ or $G_2$} \label{FG.sec}

Suppose $G$ has type $F_4$ or $G_2$ and $\car k = 2$ or 3 respectively.  The maximal ideal $\m$ equals the kernel $\gn$ of the very special isogeny; it is the unique nonzero and proper ideal of $\g$.  Both $\gn$ and $\g/\gn$, as Lie algebras, are the simple quotient $\gt / \z(\gt)$, where $\gt = \spin_8$ or $\sl_3$ respectively.  

The arguments used in the previous two sections can be extended slightly to give a result that will be sufficient to handle the cases where $G = F_4$ or $G_2$.

\begin{prop} \label{FG.dim}
Let $G$ be a simple algebraic group of type $F_4$ or $G_2$ over a field $k$ such that $\car k = 2$ or $3$ respectively. 
Let  $\rho \!: G \to \GL(V)$ be a representation of $G$.
If $V$ has a $G$-subquotient $W$ with $W^\gn = 0$ and $\dim W >  240$ or $48$ respectively, then for generic $v \in V$, $\g_v  =0$.
\end{prop}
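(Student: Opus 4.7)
The plan is to apply Lemma~\ref{ineq.spin}\eqref{mother.lie2} with $\lsub = 0$: it suffices to show $\dim x^G + \dim V^x < \dim V$ for every nonzero toral or nilpotent $x \in \g$, and a routine subquotient reduction (cf.~\cite[Example~2.1]{GG:large}) lets us work with $W$ in place of $V$. Since $\gn$ is the unique nonzero proper ideal of $\g$ for these $G$, we dichotomize based on whether $x$ lies in $\gn$ or not.

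When $x \in \gn \setminus \z(\gshort)$, the proof of the Short Root Proposition yields inequality \eqref{gshort.ineq} directly, because $\dim W > 240$ (respectively $48$) comfortably exceeds its threshold $64$ (respectively $20$), and the hypothesis $W^\gn = 0$ is exactly the one required in its proof. The residue $x \in \gn \cap \z(\gshort)$ is trivial for $G_2$ and at most two-dimensional for $F_4$; in the latter case $\Gshort \subseteq G_x$ forces $\dim x^G \leq \dim G - \dim \Gshort = 24$, and the bound $\dim W > 240 = 10 \cdot 24$ is ample for Lemma~\ref{ineq.gen}: a bounded number of $G$-conjugates of $x$ generate a subalgebra $\s$ whose $G$-saturation is all of $\gn$, so that $W^{\s} \subseteq W^{\gn} = 0$.

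For $x \notin \gn$, we adapt the Long Root Proposition using $\glong$ and invoke \cite[Theorem~12.2]{GG:large} applied to $W$ viewed as a $\Glong$-module. The key obstacle is that the Long Root Proposition's proof requires $\drho(\gn) = 0$ in order to deduce $W^\glong = W^\g = 0$, a hypothesis we lack here; we compensate by passing to a $\Glong$-subquotient $W'$ of $W$ with $(W')^\glong = 0$, whose dimension still clears the Long Root threshold by the ample margin $240 - 64$ (respectively $48 - 20$). When the $G$-orbit of a nilpotent $x$ does not meet $\glong$ (for instance, the regular nilpotent of $F_4$, which does not lie in the $D_4$-type subsystem), this approach is unavailable and we instead use the universal bound $\dim x^G \leq \dim G - \rank G$ together with the largeness of $\dim W$ and the fact that $W^\gn = 0$ to bound $\dim W^x$ directly. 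This last subcase is the primary source of technical difficulty.
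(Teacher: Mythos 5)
The central gap is the case you yourself flag as ``the primary source of technical difficulty'': nilpotent $x \notin \gn$ whose orbit does not meet $\glong$ (the regular nilpotent being only the most extreme instance). For such $x$ you offer the bound $\dim x^G \le \dim G - \rank G$ together with an unspecified direct bound on $\dim W^x$, but no mechanism for the latter is given, and none follows from the hypotheses: $W^\gn = 0$ says nothing by itself about the fixed space of a single nilpotent element. The paper closes exactly this case by degeneration: every nilpotent $x \in \g \setminus \gn$ has a long root element $y$ in the closure of $x^G$ (\cite[Remark 11.3]{GG:large}), and every toral $x \notin \gn$ has a long root element in the closure of $x^{\Gm G}$; one then checks that $e+1$ $G$-conjugates of a long root element generate all of $\g$ (because $e$ $\Glong$-conjugates generate $\glong$ and $\g$, as a $\Glong$-module, is $\glong$ plus a few small inequivalent summands), so the generated subalgebra $\s$ has $W^{\s} = W^{\g} \subseteq W^{\gn} = 0$, and generation by $e+1$ conjugates passes from $y$ to $x$ by \cite[Lemma 4.3(1)]{GG:large}. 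The thresholds in the statement are precisely $(e+1)(\dim G - \rank G) = 5\cdot 48 = 240$ and $4\cdot 12 = 48$. Without this degeneration-plus-generation step your argument does not close, and your detour through $\Glong$-subquotients becomes unnecessary once it is in place.

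Two secondary problems. First, your plan to apply Lemma \ref{ineq.spin}\eqref{mother.lie2} with $\lsub = 0$ requires the inequality for the full class $x^G$, whereas \eqref{gshort.ineq} and \cite[Th.~12.2]{GG:large} applied to $\Glong$ only control $\dim x^{\Gshort}$, resp.\ $\dim x^{\Glong}$, which may be smaller than $\dim x^G$ by as much as $\dim G - \dim \Gshort = 24$ for $F_4$. The correct structure (and the paper's) is two-step: use the elements outside $\gn$ to prove $\g_v \subseteq \gn$, then invoke the \emph{conclusion} of the Short Root Proposition, $\gn_v \subseteq \z(\gshort)$, together with $\gn \cap \z(\gshort) = 0$ (which holds because $\gn \cong \spin_8/\z(\spin_8)$ or $\sl_3/\z(\sl_3)$ is simple and centerless). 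This also shows your residual case $x \in \gn \cap \z(\gshort)$ is vacuous --- fortunately, since the ``$G$-saturation'' argument you sketch there uses the containment $W^{\s} \subseteq W^{\gn}$, which points the wrong way when $\s$ is a proper subalgebra of $\gn$. Second, the existence of a $\Glong$-subquotient $W'$ of $W$ with $(W')^{\glong} = 0$ and $\dim W'$ still above the Long Root threshold is asserted but not justified; nothing in the hypothesis $W^{\gn} = 0$ a priori prevents $W|_{\Glong}$ from having a large trivial part.
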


\begin{proof}
We will first verify that, for $x \in \g \setminus \gn$ with $x^{[p]} \in \{ 0, x \}$, we have
\begin{equation} \label{cheap.1}
\dim x^G + \dim W^x < \dim W.
\end{equation}

Suppose first that $x \in \g \setminus \gn$ is a long root element, and in particular there is a maximal torus $T$ so that $y$ is a root element in the long root subalgebra $\glong$.  Then $e$ $\Glong$-conjugates of $G$ suffice to generate $\glong$, where $(G, e) = (F_4, 4)$ or $(G_2, 3)$ by \cite[Prop.~10.4, 6.4]{GG:large}. 
As a representation of $\Glong$, $\g$ is a sum of $\glong$ and three inequivalent 8-dimensional representations (for type $F_4$) or two inequivalent 3-dimensional representations (type $G_2$), so $e+1$ $G$-conjugates of $x$ will generate $\g$.  As
\[
(e+1) \dim x^G \le (e+1) (\dim G - \rank G) < \dim W,
\]
\eqref{cheap.1} follows.

If $x \in \g \setminus \gn$ is nilpotent, then as in \cite[Remark 11.3]{GG:large} there is a long root element $y$ in the closure of $x^G$.  By the previous paragraph, $e+1$ $G$-conjugates of $y$ generate $\g$, and as in \cite[Lemma 4.3(1)]{GG:large} the same holds for $x$.
 Again \eqref{cheap.1} follows.

If $x \in \g \setminus \gn$ is toral, then it can be expressed as $\sum c_\alpha h_\alpha$ where the sum ranges over the simple roots $\alpha$ and $h_\alpha \in \Lie(T)$ corresponds to the coroot $\alpha^\vee$.  As $x \not\in \gn$, $c_\beta \ne 0$ for some long simple root $\beta$.  Arguing as in \cite[Example 4.1]{GG:large} we deduce that a long root element $x_\beta$ lies in the closure of $x^{\Gm G}$ and again we have verified \eqref{cheap.1}.

As the nilpotent and toral elements of $\g_v$ lie in $\gn$ for generic $v \in V$, so does all of $\g_v$.  Finally we apply the short root proposition, Prop.~\ref{gshort.prop}, to see that $\g_v = 0$.
\end{proof}

\subsection*{Restricted irreducible representations} Let $G = F_4$ and $\car k = 2$ or $G = G_2$ and $\car k = 3$, and suppose
$\rho \!: G \to \GL(V)$ is an irreducible representation  with restricted highest weight $\la$.    We aim to prove Theorem \ref{MT.restricted} in this case.  

If $\dim V \le \dim G$, then by A.50 and A.49 in \cite{luebeck}, $V$ is either the natural module (of dimension 26 or 7, respectively) or the irreducible quotient $\g/\gn$ of the adjoint representation.  For $\rho$ the natural module, $\ker \drho = 0$ and a generic vector has stabilizer of type $D_4$ or $A_2$ respectively (of dimension 28 or 8 respectively).  Note that this stabilizer has dimension larger than $\dim \g/\n$, so it meets $\n$, the image of $\g$ under the very special isogeny.  It follows that composing the natural representation with the very special isogeny gives a representation with $\ker \drho = \gn$ that is not virtually free; this is $\g/\gn$.

If $\dim V > 240$ or 48 respectively, then $V$ is virtually free by  Proposition \ref{FG.dim}.  Table A.50 in \cite{luebeck} shows that we have considered all restricted irreducible representations of $F_4$, so the proof of Theorem \ref{MT.restricted} is complete in that case.

For $G_2$, there are two remaining possibilities for $\rho$, according to Table A.49.  The first, with highest weight $2\omega_2$ (numbered as in Table \ref{luebeck.table}), has dimension 27 and $\ker \drho = 0$.  It factors through the representation of $\SO_7$ on the irreducible component of $\Sym^2(k^7)$.  As that representation is generically free for $\so_7$ by \cite[Lemma 13.1]{GG:large}, so is $\drho$.  Alternatively, one can verify that this $\drho$ is virtually free using a computer.

The last possibility for $\rho$, with highest weight $2\omega_1$, is obtained by composing the representation in the preceding paragraph with the very special isogeny.  This representation is virtually free by the considerations in the previous paragraph, or by Prop.~\ref{glong.prop}, completing the proof of Theorem \ref{MT.restricted} for $G$ of type $G_2$.


\section{Type $B_n$ with $n \ge 2$} \label{B.sec}

For $G = \Spin_{2n+1}$ for some $n \ge 2$ over a field $k$ of characteristic 2, the Lie algebra $\g$ is uniserial where the short root subalgebra $\n$ is the Heisenberg Lie algebra $\lsub$ from \S\ref{heis.sec}.

Any representation of $G$ is a direct sum $V_1 \oplus V_2$ where $V_1^{\z(\g)} = 0$ and $\z(\g)$ acts trivially on $V_2$; these are just the eigenspaces
of $\z(\g)$. 

\subsection*{Representations with $V^{\z(\g)} = 0$}

\begin{eg}[spin representation] \label{B.spin}
The spin representation $V := L(\omega_1)$  of $G$ (where we number the weights of $G$ as in Table \ref{luebeck.table}) is generically free if and only if $n \ge 7$ \cite{GG:spin}, if and only if $\dim V > \dim G$.  
We remark that one can check with a computer that for $n = 2, 3, 4, 5, 6$, a sum of $4,4,3,2,2$ copies of $V$ is generically free for $\spin_{2n+1}$.
\end{eg}

\begin{eg} \label{B.spin.tensor}
If $V$ is an irreducible representation of $G$ and $V^{\z(\g)} = 0$, then $V \cong L(\omega_1) \ot W$ for some irreducible representation $W$.  This follows from the discussion in section \ref{struct} because $\delta_1$ is the only short simple root.
\end{eg}

\begin{eg} \label{B.nil}
Suppose $x \in \so_{2n+1}$ has $x^{[2]} = 0$ and rank $r > 0$.  The largest possible conjugacy classes for $x$ have a 4-dimensional indecomposable summand $W_2(2)$ or a 3-dimensional indecomposable summand $D(2)$ (following the notation in \cite{Hesselink} or \cite[\S5.6]{LiebeckSeitz}), and the centralizer in $\SO_{2n+1}$ of one of these largest classes has dimension
\[
\sum_{i=1}^r 2(i-1) + \sum_{i=r+1}^{2n+1-r} = \binom{2n+1-r}{2} + \binom{r}{2}.
\]
Consequently, $\dim x^{\SO_{2n+1}} \le r(2n+1-r)$.  (Compare \cite[Example 10.5]{GG:large} for $\SO_{2n}$.)
\end{eg}

\begin{lem} \label{B.faithful}
Let $G = \Spin_{2n+1}$ for some $n \ge 2$ over a field $k$ of characteristic 2, and suppose that $V$ is a representation of $G$ such that $V^{\z(\g)} = 0$.  Then for noncentral $x \in \g$, $\dim V^x \le \frac34 \dim V$.  Moreover, if (1) $\dim V > 4n^2 + 4n$ or (2) $n \ge 8$, then  (a) $\dim x^G + \dim V^x < \dim V$ for all noncentral $x \in \g$ such that $x^{[2]} \in \{ 0, x \}$ and (b) $V$ is generically free for $\g$.
\end{lem}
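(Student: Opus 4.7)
The plan is to apply Lemma \ref{ineq.spin}\eqref{mother.lie2} with $\lsub = 0$; the main technical input, on which everything else depends, is the bound $\dim V^x \le \tfrac34 \dim V$ for noncentral $x$. I first record structural consequences of the hypothesis $V^{\z(\g)} = 0$: the one-dimensional $G$-submodule $\z(\n)$ of $\g$ lies inside $\z(\g)$ by Lemma \ref{g.submod} and, by dimension, coincides with it, so the generator $z$ of $\z(\g)$ is the Heisenberg center and acts invertibly on $V$. A second application of Lemma \ref{g.submod} to $\ker \drho$ (which meets $\z(\g)$ trivially) then gives $\ker \drho = 0$.

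For the $\tfrac34$ bound I separate cases. If $x \in \n$ is noncentral, the Heisenberg argument in the proof of Lemma \ref{heis}\eqref{heis.1} yields $g \in G$ with $[x, x^g] = cz$ for some $c \ne 0$; since $\drho([x, x^g]) = c\,\drho(z)$ is invertible on $V$ but annihilates $V^x \cap V^{x^g}$, that intersection is zero and $\dim V^x \le \tfrac12 \dim V$. If $x \in \g \setminus \n$ is noncentral, the target bound will follow once I produce $g \in G$ with $[x, x^g] \in \n \setminus \z(\g)$, for then $V^x \cap V^{x^g} \subseteq V^{[x, x^g]}$ has dimension at most $\tfrac12 \dim V$ by the previous case, and hence $\dim V^x \le \tfrac34 \dim V$.

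The crux is producing that $g$. Via Lemma \ref{glong}, $\g/\n \cong \glong/\z(\glong)$ is of type $D_n$ modulo its center, and $\bar x \in \g/\n$ is nonzero. For toral $x$ the construction is direct: the common zero set on $\tor$ of the short roots is exactly $\z(\g)$, so some short root $\alpha$ satisfies $\alpha(x) \ne 0$, and choosing a nonzero $y \in \g_\alpha$ one computes $[x, x^{\exp(y)}] = \alpha(x)^2\, y \in \n \setminus \z(\g)$. For nilpotent $x$ an analogous construction works, using a suitable element $\bar y \in C_{\g/\n}(\bar x)$ (lifted to $\g$) so that the resulting bracket avoids $\z(\g)$.

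To conclude (a), I verify $\dim x^G + \dim V^x < \dim V$ for every nilpotent or toral $x \ne 0$. The central case $x = z$ is trivial since $\drho(z)$ is invertible and $z^G = \{z\}$. For noncentral $x \in \n$, the inclusions $\z(\g) \subseteq \n$ and $V^{\z(\g)} = 0$ force $V^\n = 0$, so Lemma \ref{heis}\eqref{heis.gen} handles the inequality. For $x \in \g \setminus \n$ noncentral with $x^{[2]} \in \{0, x\}$, the $\tfrac34$ bound reduces the task to $\dim x^G < \tfrac14 \dim V$: Example \ref{B.nil} gives $\dim x^G \le n(n+1)$ for nilpotent $x^{[2]} = 0$, and the same bound for noncentral toral $x$ follows from a characteristic-$2$ Cartan-matrix analysis of $|\Phi \setminus \Phi_x|$. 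Under hypothesis (1), $\dim V > 4n^2 + 4n$ then yields $\tfrac14 \dim V > n^2 + n$ as required. Under hypothesis (2), Example \ref{B.spin.tensor} shows every composition factor of $V$ contains a tensor copy of $L(\omega_1)$, so $\dim V \ge 2^n$ with equality only if $V \cong L(\omega_1)$ (handled directly by Example \ref{B.spin}'s generic freeness of the spin representation for $n \ge 7$); otherwise $\dim V \ge 2 \cdot 2^n > 4n^2 + 4n$ (valid for $n \ge 8$), reducing to hypothesis (1). Part (b) is then immediate from (a) via Lemma \ref{ineq.spin}\eqref{mother.lie2}.
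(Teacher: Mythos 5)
Your overall architecture (reduce to Lemma \ref{ineq.spin}\eqref{mother.lie2}, bound $\dim x^G$ by $n^2+n$ via Example \ref{B.nil} and the toral centralizer, handle $\n$ by the Heisenberg lemma) matches the paper, and your observations that $\z(\n)=\z(\g)$ and $\ker\drho=0$ are correct. The problem is your proof of the key inequality $\dim V^x \le \tfrac34\dim V$, which you try to make self-contained via commutators. For noncentral $x\in\n$ the $\tfrac12$ bound is fine. But for $x\in\g\setminus\n$ the entire argument rests on producing $g$ with $[x,x^g]\in\n\setminus\z(\g)$, and for nilpotent $x$ you simply assert that ``an analogous construction works.'' This is a genuine gap, and it is not a routine one: for a long root element $x$, any conjugate $x^g$ lying in the same long root subalgebra $\glong$ gives $[x,x^g]\in\glong$, and $\glong\cap\n=\z(\g)$ (cf.\ Lemma \ref{glong} and Example \ref{B3}), so no such conjugate can work; you must leave $\glong$ and still control where the bracket lands, which requires an actual computation you have not done. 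Note also that the bound is tight --- a long root element on the spin representation of $\Spin_7$ has $\dim V^x$ exactly $\tfrac34\dim V$ --- so there is no slack to absorb imprecision. Finally, the lemma asserts the $\tfrac34$ bound for \emph{all} noncentral $x$, and your case split only covers toral and nilpotent elements; you would need the orbit-closure reduction (as the paper uses) to handle general $x$.

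The paper avoids all of this by a different route: it reduces by orbit closures to nilpotent $x$, proves the $\tfrac34$ bound for the spin representation by citing \cite[Prop.~2.1(i)]{GG:spin}, and then propagates it to arbitrary $V$ with $V^{\z(\g)}=0$ using the tensor factorization $V\cong L(\omega_1)\ot W$ of Example \ref{B.spin.tensor} together with $\dim V^x\le(\dim L(\omega_1)^x)(\dim W)$ and a composition-series argument. If you want to keep your structure, you should replace your commutator step for $x\notin\n$ by this reduction to the spin module. Two smaller points: in case (2) with $V$ the spin representation ($n\ge 8$, $\dim V=2^n<4n^2+4n$ when $n=8$), quoting generic freeness from Example \ref{B.spin} gives conclusion (b) but not the inequality (a), which is part of the statement and must be taken from \cite{GG:spin} directly; and when transferring the nilpotent orbit bound from $\SO_{2n+1}$ to $\Spin_{2n+1}$ you should note, as the paper does, that $\spin_{2n+1}\to\so_{2n+1}$ is injective on square-zero elements.
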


\begin{proof}
We first prove that $\dim V^x \le \frac34 \dim V$.  By passing to orbit closures, it suffices to prove this in case $x$ is nilpotent.  The crux case is where $V$ is the spin representation, where the claim holds if $n = 2$ (because $\dim V = 4$) and if $n \ge 3$ by \cite[Prop.~2.1(i)]{GG:spin}.  
  
 If $V$ is irreducible, then it is $L(\omega_1) \ot W$ for some irreducible $W$.
 In this case, $\dim V^x \le (\dim L(\omega_1)^x) (\dim W)$  as in \cite[Lemma 11.1]{GG:irred}, proving the claim. 
Finally, for a composition series $0 = V_0 \subset V_1 \subset \cdots \subset V_r = V$ of $V$, we have $(V_i/V_{i-1})^{\z(\g)} = 0$ because $\z(\g)$ acts semisimply on $V$ and $\dim (V')^x \le \frac34 \dim V$ for $V' := \oplus V_i/V_{i-1}$, so also for $V$, proving that $\dim V^x \le \frac34 \dim V$.

Now fix noncentral $x \in \g$ such that $x^{[2]} \in \{ 0, x \}$.  If $x^{[2]} = 0$, then the image $\bx \in \so_{2n+1}$ of $x$ --- as a $(2n+1)$-by-$(2n+1)$ matrix --- has rank $r > 0$ so $\dim \bx^{\SO_{2n+1}} \le r(2n-r+1)$, whence $\dim x^{\Spin_{2n+1}} \le r(2n-r+1)$ because the map $\spin_{2n+1} \to \so_{2n+1}$ is injective on nilpotents.  (Indeed, if $x$ and $x+z$ are square-zero and $z$ is central in $\spin_{2n+1}$, then $0 = (x+z)^{[2]} = z^{[2]}$, so $z = 0$.)
If $x^{[2]} = x$, then the centralizer of $x$ has type $D_r \times B_{n-r}$ for some $r$ and we find the same formula for $\dim x^G$.  The dimension is maximized for $r = (2n+1)/4$, so $\dim x^G \le n^2 + n$.  Thus, (1) implies (a).

Suppose (2), $n \ge 8$.  If $V$ is the spin representation, then $\dim x^G + \dim V^x < \dim V$ as in \cite{GG:spin}.  If $V$ is restricted irreducible but not the spin representation, i.e., $V \cong (\mathrm{spin}) \ot L(\la_\ell)$, then $\dim V \ge 2n 2^n > 4n^2 + 4n$ and again (a) holds.  Claim (a) follows for general $V$ as in the second paragraph of this proof.

(a) implies (b) as recalled in \S\ref{back.sec}.
\end{proof}

Note the following corollary.

\begin{cor}  \label{B.irr1}  Let $G = \Spin_{2n+1}$ for some $n \ge 2$ over a field $k$ of characteristic 2, and suppose that $V$ is an
irreducible representation of $G$ such that $V^{\z(\g)} = 0$.    If $\dim V > \dim G$, then
$V$ is generically free.   
 \end{cor}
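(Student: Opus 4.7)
The plan is to split on $n$. For $n \ge 8$, Lemma \ref{B.faithful}(2) gives the conclusion directly from $V^{\z(\g)} = 0$, and the hypothesis $\dim V > \dim G$ is not even used.

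For $2 \le n \le 7$, irreducibility and $V^{\z(\g)} = 0$ let Example \ref{B.spin.tensor} write $V \cong L(\omega_1) \otimes W$ for some irreducible $W$, so $\dim V = 2^n \dim W$. The possible values of $\dim W$ are tightly constrained: each dimension of an irreducible representation of $G$ factoring through the very special isogeny (equivalently, an irreducible $\Sp_{2n}$-representation up to Frobenius twist) is either $1$ or at least $2n$, as one reads off from L\"ubeck's tables \cite{luebeck}. Combined with $\dim V > 2n^2 + n$, this forces either $\dim W = 1$ (in which case $V = L(\omega_1)$ satisfies $\dim V > \dim G$ only when $n = 7$, and is then generically free by \cite{GG:spin}) or $\dim W \ge 2n$.

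For $\dim W \ge 2n$ and $n \ge 4$, one verifies that $2^{n+1}n > 4n^2 + 4n$, so Lemma \ref{B.faithful}(1) applies and $V$ is generically free. The borderline cases are $(n,\dim V) = (3, 48)$ and $(2, 16)$. For $n = 3$ one has $\dim V = 48 > 4n^2 = 36$, so Proposition \ref{gshort.prop} applies (using $V^\gn = 0$, since $\gn$ has no fixed vector on the spin factor $L(\omega_1)$) and forces $\gn_v$ into the one-dimensional center $\z(\gshort)$ for generic $v$; the remaining $\g/\gn$ direction is then controlled by an argument in the spirit of Proposition \ref{glong.prop}, using that $W$ is a faithful representation of the $\Sp_6$-quotient.

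The main obstacle is $n = 2$ with $\dim V = 16$, where neither Lemma \ref{B.faithful}(1) ($24 > 16$) nor Proposition \ref{gshort.prop} ($4n^2 = 16 = \dim V$, with the required inequality not strict) applies. Here I would sharpen the bound $\dim V^x \le \tfrac34 \dim V$ by exploiting the specific Jordan structure of square-zero and toral $x$ on the $4$-dimensional spin representation of $\Spin_5 = \Sp_4$: a noncentral $x$ of rank $r$ satisfies $\dim (\text{spin})^x \le 4 - r$, hence $\dim V^x \le (4 - r)\dim W = 4(4 - r)$, while $\dim x^G \le r(5 - r)$; a case-by-rank check then closes $\dim x^G + \dim V^x < 16$ and finishes the case.
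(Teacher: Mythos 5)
Your reduction coincides with the paper's up to the hard cases: writing $V \cong L(\omega_1)\otimes W$ via Example \ref{B.spin.tensor}, disposing of trivial $W$ by Example \ref{B.spin}, of $n \ge 8$ by Lemma \ref{B.faithful}, and of $\dim V > 4n^2+4n$ by Lemma \ref{B.faithful}(1), which correctly isolates $(n,\dim V)=(3,48)$ and $(2,16)$. The gap is in how you propose to close those two cases, which is exactly where the paper stops counting dimensions and instead verifies the claim by computer (citing \cite[Examples 5.2, 5.3]{GG:irred}), after first splitting off the subcase where $W$ is a nontrivial Frobenius twist (there $V$ is a sum of $2n$ copies of the spin module as a $\g$-module, handled by the remark in Example \ref{B.spin}). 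For $n=2$ your own numbers do not give a strict inequality: for a rank-one square-zero $x$ (a long root element) you get $\dim x^G + \dim V^x \le r(5-r) + 4(4-r) = 16 + r - r^2 = 16 = \dim V$, so criterion \eqref{ineq.mother} is not verified for precisely the class that matters. The tensor-product estimate $\dim V^x \le (\dim L(\omega_1)^x)(\dim W)$ genuinely tops out at $12$ here; improving it requires tracking how $x$ acts on the second factor $W$ through the very special isogeny (under which a long root element of $\spin_5$ does act nontrivially), or switching to the generation criterion \eqref{ineq.gen} --- neither of which your sketch does.

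For $n=3$ the situation is similar: $\dim V = 48$ sits exactly at the threshold $4n^2+4n$ of Lemma \ref{B.faithful}(1), and your appeal to ``an argument in the spirit of Proposition \ref{glong.prop}'' is not available as stated, because that proposition's hypothesis --- that $\rho$ vanish on $\ker\dpi$ --- fails here (indeed $V^{\gn}=0$, since the spin factor is a nontrivial irreducible $\gn$-module). The Short Root Proposition step is fine and pins $\gn_v$ into $\z(\gshort)$, which $V^{\z(\g)}=0$ then kills, but the complementary step of excluding elements of $\g\setminus\gn$ from $\g_v$ is only gestured at and is exactly the part that needs either a careful long-root-subgroup argument with the correct hypotheses or the computer verification the paper uses. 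As written, the proposal does not establish the two borderline cases.
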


\begin{proof} Write $V=L(\omega_1) \otimes W$ as in Example \ref{B.spin.tensor}.  If $W$ is trivial, then the claim is from Example \ref{B.spin}.  If $n \ge 8$, then the claim is Lemma \ref{B.faithful}.  So suppose $W$ is nontrivial and $2 \le n < 8$.  As $\dim W \ge 2n$, we have
$\dim V > 4n^2 + 4n$ unless $n =2$ or 3 and $\dim W = 2n$.

So suppose $n = 2$ or 3 and $W$ is the orthogonal module or a nontrivial Frobenius twist of it.  In the latter case, $V$ is a direct sum of $2n$ copies of the spin module and so is generically free (Example \ref{B.spin}).  In the former case, one can verify with a computer that $V$ is generically free, as was recorded in \cite[Examples 5.2, 5.3]{GG:irred}.
\end{proof} 

Note that the only irreducible modules with $V^{\z(\g)} = 0$ and $\dim V \le \dim G$ are the spin modules for $n \le 6$.  

\subsection*{Representations killed by $\z(\g)$}
We have dealt with those representations $V$ of $G$ such that $V^{\z(\g)} = 0$.  If $\z(\g)V = 0$, then the highest weight of each of the composition factors of $V$ lies in the root lattice.  We have:

\begin{lem}  \label{B.roots}
In a root system of type $B_n$ ($n \ge 2$), for $\la$ in the root lattice and $\alpha$ a short root, $\qform{\alpha^\vee, \la}$ is an even integer.
\end{lem}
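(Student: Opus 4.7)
The plan is to prove this by an explicit calculation in the standard coordinate realization of $B_n$. First, I would realize the root system of type $B_n$ in $\mathbb{R}^n$ (with the usual inner product) so that the long roots are $\pm e_i \pm e_j$ for $i \ne j$ and the short roots are $\pm e_i$, where $e_1, \dots, e_n$ denotes the standard orthonormal basis.

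Next, I would identify the root lattice. Since the short roots $\pm e_i$ already lie in the root system and span $\mathbb{Z}^n$ over $\mathbb{Z}$, the root lattice is exactly $\mathbb{Z}^n = \{\sum_j m_j e_j : m_j \in \mathbb{Z}\}$. (Alternatively, one checks that the simple roots $e_1-e_2, \dots, e_{n-1}-e_n, e_n$ generate $\mathbb{Z}^n$.)

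Then I would compute the coroots of short roots. For $\alpha = \pm e_i$, we have $(\alpha,\alpha) = 1$, hence
\[
\alpha^\vee \;=\; \frac{2\alpha}{(\alpha,\alpha)} \;=\; \pm 2 e_i.
\]
Therefore, for $\lambda = \sum_j m_j e_j$ in the root lattice,
\[
\langle \alpha^\vee, \lambda\rangle \;=\; \pm 2 m_i \;\in\; 2\mathbb{Z},
\]
proving the claim. The proof is entirely routine: there is no real obstacle, as the statement amounts to the elementary fact that short coroots in $B_n$ lie in $2\mathbb{Z}^n$ while the root lattice is $\mathbb{Z}^n$.
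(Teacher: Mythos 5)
Your proof is correct, and it takes a slightly different route from the paper's. The paper argues abstractly: it uses transitivity of the Weyl group on short roots to reduce to the short simple root, linearity of $\qform{\alpha^\vee,\la}$ in $\la$ to reduce to $\la$ a simple root, and then reads the parity off the Cartan matrix of $B_n$ (whose row for the short simple root has entries $0,\dots,0,-2,2$). You instead work in the standard realization of $B_n$ in $\mathbb{R}^n$, where the root lattice is visibly $\mathbb{Z}^n$ and the coroot of a short root $\pm e_i$ is $\pm 2e_i$, so the pairing is $\pm 2m_i$. Both arguments are one-line verifications once set up; yours makes the underlying reason transparent (short coroots lie in $2\mathbb{Z}^n$ while the root lattice is $\mathbb{Z}^n$, exactly as you say), whereas the paper's version avoids choosing coordinates and works uniformly from the Cartan matrix, which is the form of the data most readily available in the Lie-theoretic context of the surrounding argument. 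There is no gap in your write-up.
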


\begin{proof}
Because the Weyl groups acts transitively on short roots, we may assume that $\alpha^\vee$ is the short simple root.  Because the expression $\qform{\alpha^\vee, \la}$ is linear in $\la$, we may assume that $\la$ is a simple root.  Then the claim follows from looking at the Cartan matrix.
\end{proof}

We note for later use:

\begin{lem} \label{SO.faithful}
If $\car k = 2$ and $\rho \!: \SO_{2n+1} \to \GL(V)$ is an irreducible representation for some $n \ge 2$, then  $\rho$ is not faithful and the composition of $\spin_{2n+1} \to \so_{2n+1}$ with $\drho$ vanishes on $\n$.
\end{lem}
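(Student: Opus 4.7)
The plan is to lift $\rho$ through the central isogeny $\Spin_{2n+1} \to \SO_{2n+1}$, apply Steinberg's tensor product theorem, and exploit Lemma~\ref{B.roots} to recognize the short part of the highest weight as a Frobenius twist. First I would set $\rhot$ to be the composition $\Spin_{2n+1} \to \SO_{2n+1} \xrightarrow{\rho} \GL(V)$; this $\rhot$ is irreducible with highest weight $\la$ lying in the root lattice of $B_n$, since $\rhot$ is trivial on the kernel $\mu_2$ of the standard isogeny $\Spin_{2n+1} \to \SO_{2n+1}$.

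Next I would apply Lemma~\ref{B.roots} to the short simple root $\alpha_1$ (short in the L\"ubeck numbering of Table~\ref{luebeck.table}, as confirmed by Example~\ref{B.spin}) to deduce that the coefficient $c_1 := \qform{\alpha_1^\vee, \la}$ is even. Writing $\la = \la_\ell + c_1 \omega_1$ in the notation of \S\ref{struct}, that section gives $V \cong L(\la_\ell) \otimes L(c_1 \omega_1)$; moreover $L(\la_\ell)$ factors through the very special isogeny $\pi \colon \Spin_{2n+1} \to \Sp_{2n}$, so its derivative vanishes on $\n = \ker \dpi$. Because $c_1$ is even, Steinberg's tensor product theorem also gives $L(c_1 \omega_1) \cong L((c_1/2)\omega_1)^{[2]}$, a Frobenius twist whose derivative is zero. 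By the Leibniz rule on the tensor product, $\drhot$ therefore vanishes on $\n$; since $\drhot$ factors as $\spin_{2n+1} \to \so_{2n+1} \xrightarrow{\drho} \gl(V)$, this is the second assertion of the lemma.

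For the non-faithfulness, I would observe that $\ker(\spin_{2n+1} \to \so_{2n+1}) = \Lie(\mu_2)$ has dimension at most one, while $\dim \n = 2n+1 \ge 5$ for $n \ge 2$, so $\n$ cannot be contained in this kernel and its image in $\so_{2n+1}$ is nonzero. Since $\drho$ annihilates this image, $\ker \drho$ is nonzero, so $\ker \rho$ contains a nontrivial infinitesimal subgroup scheme and $\rho$ is not faithful.

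I do not anticipate a real obstacle: the argument is a direct combination of Steinberg's tensor product theorem, Lemma~\ref{B.roots}, and the identification $\n = \ker \dpi$. The only mild care needed is to read off the short simple root correctly from the Dynkin diagram, so that the parity constraint from Lemma~\ref{B.roots} applies to the coefficient $c_1$ of $\omega_1$ rather than to some long coefficient.
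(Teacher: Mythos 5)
Your proof is correct and follows essentially the same route as the paper: lift through $\Spin_{2n+1} \to \SO_{2n+1}$, note the highest weight lies in the root lattice, apply Lemma~\ref{B.roots} to see that it pairs evenly with the short coroot, and conclude that $\drho$ kills $\gn$, whose image in $\so_{2n+1}$ is nonzero. The only cosmetic difference is that where the paper invokes Steinberg's Theorem~11.1 directly to get the vanishing on $\gn$, you reassemble the same conclusion from the factorization $L(\la) \cong L(\la_\ell)\otimes L(\la_s)$ together with the Frobenius-twist observation for $L(c_1\omega_1)$; both are instances of the same Steinberg machinery recalled in \S\ref{struct}.
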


\begin{proof}
The highest weight $\la$ of $\rho$ is in the root lattice (because $\SO_{2n+1}$ is adjoint), so Lemma \ref{B.roots} shows that $\la$ vanishes on coroots corresponding to short roots.  Thus by \cite[Th.~11.1]{St:rep} the composition $\spin_{2n+1} \to \so_{2n+1} \to \gl(V)$ vanishes on the ideal $\gn$ of $\spin_{2n+1}$ for $\gn$ as in \S\ref{struct}, which has nonzero image in $\so_{2n+1}$.
\end{proof}

\begin{rmk}
Lemma \ref{B.roots} can be viewed, by way of the duality between the root systems of types $B$ and $C$, as equivalent to the statement that for type $C$ every long root is 2 times a weight.  From this perspective, Lemma \ref{SO.faithful} is the analogue for type $B$ of the well-known fact that, when $\car k = 2$, Cartan subalgebras in $\sp_{2n}$ are properly larger than maximal toral subalgebras.  

By the way, Lemmas \ref{B.roots} and \ref{SO.faithful} also apply to type $A_1$, mutatis mutandis.  See \cite[Example 3.3]{GG:large} for the version of Lemma \ref{SO.faithful}.
\end{rmk}

The very special isogeny $G = \Spin_{2n+1} \to \Sp_{2n}$ is another way of viewing the trivial statement that the alternating bilinear form on the natural module of $G$ is $G$-invariant.  It factors through $\Spin_{2n+1} \to \SO_{2n+1}$, and the image $\g/\n$ of $\g$ in $\sp_{2n}$ is  isomorphic to the derived subalgebra of $\so_{2n}$, which is a simple $G$-module (i.e., $\gn = \m$) if $n$ is odd and has a 1-dimensional center if $n$ is even (i.e., $\gn$ has codimension 1 in $\m$).  


\begin{lem}  \label{B.vs}
Let $\rho \!: \Spin_{2n+1} \to \GL(V)$ be a representation over a field of characteristic $2$, for some $n \ge 3$.  If $\rho$ factors through the very special isogeny, $V^{\spin_{2n+1}} = 0$, and $\dim V > 4n^2$, then $\spin_{2n+1}$ and $\so_{2n+1}$ act virtually freely on $V$.
\end{lem}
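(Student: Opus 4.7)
The plan is to identify $\ker \drho$ precisely, apply the Long Root Proposition \ref{glong.prop} to squeeze $\g_v$ into $\m$, and then close the one-dimensional gap $\m/\gn$ that arises when $n$ is even. Since $\rho$ factors through the very special isogeny $\pi\!: \Spin_{2n+1} \to \Sp_{2n}$, we have $\ker \drho \supseteq \gn = \ker \dpi$. By Lemma \ref{g.submod}, the $G$-submodules of $\g$ containing $\gn$ are $\gn$, $\m$, and $\g$. The case $\ker \drho = \g$ is excluded by $V^\g = 0$ and $V \ne 0$. If $\ker \drho = \m$, then $\g/\m$ is one-dimensional, and its generator $\bar{x}$ must act on $V$ by an injective (hence invertible) endomorphism because $V^\g = V^{\bar{x}} = 0$; so every nonzero $v$ has $\g_v = \m = \ker \drho$. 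This reduces us to the case $\ker \drho = \gn$ exactly.

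Now apply the Long Root Proposition \ref{glong.prop} with $W = V$. The hypothesis $V^\g = 0$ is given, and the bound $\dim V > 4n^2$ exceeds the required threshold ($30$ for $n = 3$, as $36 > 30$; $4n^2$ for $n \ge 4$). This gives $\g_v \subseteq \m$ generically, and together with $\gn = \ker \drho \subseteq \g_v$ we obtain $\gn \subseteq \g_v \subseteq \m$. When $n$ is odd, $\gn = \m$ (as recorded just before this lemma), so $\g_v = \gn = \ker \drho$ immediately. When $n$ is even, $\m/\gn$ is one-dimensional; pick any $z \in \m \setminus \gn$. Then $z \notin \ker \drho$, hence $\drho(z) \ne 0$ and $V^z$ is a proper subspace of $V$; for generic $v$ outside $V^z$, $z \notin \g_v$, which forces $\g_v$ to be a proper subspace of $\m$ containing $\gn$, and hence $\g_v = \gn = \ker \drho$.

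For $\so_{2n+1}$, the very special isogeny factors through $\Spin_{2n+1} \to \SO_{2n+1}$, so $\rho$ descends to $\rho'\!: \SO_{2n+1} \to \GL(V)$; the same identification of the kernel together with a parallel dimension-of-stabilizer argument produces the corresponding virtual freeness for $\so_{2n+1}$. The main obstacle is the $n$ even case, where the Long Root Proposition alone leaves a one-dimensional slack between $\gn$ and $\m$; our prior reduction to $\ker \drho = \gn$ makes this slack inexpensive to close, using only that an element of $\m \setminus \gn$ cannot lie in $\ker \drho$ and so must act nontrivially on $V$.
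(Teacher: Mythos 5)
Your route for $\spin_{2n+1}$ --- the Long Root Proposition to force $\g_v \subseteq \m$, then closing the at-most-one-dimensional gap $\m/\gn$ --- is genuinely different from the paper's, which instead pushes everything through the very special isogeny: the image of $\so_{2n+1}$ in $\sp_{2n}$ is $\so_{2n}$ and the image of $\spin_{2n+1}$ is $[\so_{2n},\so_{2n}]$, so $V^{[\so_{2n},\so_{2n}]}=0$ and Theorem A of \cite{GG:large} (applicable since type $D_n$ is not of special characteristic) finishes both claims at once. Your version, however, has a structural error at the outset: the reduction to $\ker\drho=\gn$ rests on the claim that $\g/\m$ is one-dimensional. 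It is not --- $\m$ is the unique \emph{maximal} submodule, so $\g/\m$ is the irreducible head of the adjoint module, of dimension $2n^2-n-1$ or $2n^2-n-2$; you have confused $\g/\m$ with $\m/\gn$. The case $\ker\drho=\m$ genuinely occurs (a sum of three copies of $L(\omega_{n-1})$ satisfies every hypothesis), though it is harmless: there the Long Root Proposition already gives $\m=\ker\drho\subseteq\g_v\subseteq\m$. So this slip is repairable, but as written your reduction is unjustified.

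The more serious gap is the claim about $\so_{2n+1}$. The differential of $\Spin_{2n+1}\to\SO_{2n+1}$ kills the one-dimensional center, so the image of $\spin_{2n+1}$ in $\so_{2n+1}$ is a \emph{proper}, codimension-one ideal; virtual freeness for $\spin_{2n+1}$ therefore does not formally transfer to $\so_{2n+1}$. One must rule out generic stabilization by elements of $\so_{2n+1}$ outside that image --- equivalently, after passing to $\sp_{2n}$, by elements of $\so_{2n}\setminus[\so_{2n},\so_{2n}]$ --- and your appeal to ``a parallel dimension-of-stabilizer argument'' supplies nothing here: your entire argument lives inside $\spin_{2n+1}$, and the Long Root Proposition is stated only for simply connected $G$. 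This extra codimension-one worth of elements is exactly what the paper's reduction to $\so_{2n}$ and Theorem A of part I handles for free, and it is the part of the statement your proposal leaves unproved.
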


\begin{proof}
By hypothesis, $\rho$ factors through $\Spin_{2n+1} \to \SO_{2n+1} \to \Sp_{2n}$.  The image of $\so_{2n+1}$ in $\sp_{2n}$ is $\so_{2n}$, the unique maximal $\Sp_{2n}$-invariant ideal in $\sp_{2n}$ and the Lie algebra of a subgroup $\SO_{2n} \subset \Sp_{2n}$.  It suffices to verify that $\so_{2n}$ acts virtually freely on $V$.

The image of $\spin_{2n+1}$ in $\so_{2n}$ is $[\so_{2n}, \so_{2n}]$, so $V^{[\so_{2n}, \so_{2n}]} = 0$.  Applying now \cite[Th.~A]{GG:large} gives the claim.
\end{proof}

\begin{cor} \label{B.quo} 
Let $V$ be a representation of $G := \Spin_{2n+1}$ for some $n \ge 2$ and assume $\car k = 2$.
If $V^{\g} = 0$ and $\dim V > 8n^2 + 4n$,  then a generic $v \in V$ has $\g_v \subseteq \m$ and $\g$ acts virtually freely on $V$.
\end{cor}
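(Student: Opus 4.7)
The plan is to decompose $V = V_1 \oplus V_2$ using the eigenspaces of the toral Lie algebra $\z(\g)$: take $V_1$ to be the eigenspace where a generator of $\z(\g)$ acts as the identity and $V_2 = V^{\z(\g)}$, so that $V_1^{\z(\g)} = 0$ and $\z(\g) V_2 = 0$. Both are $G$-submodules, and the hypothesis $V^\g = 0$ forces $V_1^\g = V_2^\g = 0$. I would aim to prove virtual freeness of $\g$ on each of $V_1$ and $V_2$ separately: if this is done, then for generic $v = v_1 + v_2 \in V$ one has $\g_v = \g_{v_1} \cap \g_{v_2} = \ker \drho|_{V_1} \cap \ker \drho|_{V_2} = \ker \drho|_V$. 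Moreover, since $V$ is nonzero and $V^\g = 0$, the kernel $\ker \drho|_V$ is a proper $G$-submodule of $\g$, and Lemma \ref{g.submod} then places it inside $\m$, yielding $\g_v \subseteq \m$.

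Since $\dim V > 8n^2 + 4n = (4n^2 + 4n) + 4n^2$, at least one of $\dim V_1 > 4n^2 + 4n$ or $\dim V_2 > 4n^2$ must hold. In the first subcase, Lemma \ref{B.faithful} directly yields generic freeness of $\g$ on $V_1$, and hence on $V$ (since the stabilizer of $v_1 + v_2$ is contained in the stabilizer of $v_1$). This immediately gives both conclusions of the corollary with $\ker\drho = 0$.

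In the remaining subcase, $\dim V_2 > 4n^2$, and the plan is to reduce to Lemma \ref{B.vs}. Because $\z(\g) V_2 = 0$, every irreducible $G$-composition factor of $V_2$ has highest weight in the root lattice, so Steinberg's tensor product theorem combined with Lemma \ref{SO.faithful} forces $\n$ to act trivially on each irreducible composition factor. The main obstacle is that this does \emph{not} imply $\n V_2 = 0$ --- non-split extensions of $\n$-trivial irreducibles can have nontrivial $\n$-action, so Lemma \ref{B.vs} is not directly applicable to $V_2$ as a whole. I expect the resolution to proceed via the $\n$-socle filtration of $V_2$, whose successive quotients are each $\n$-trivial and hence factor through the very special isogeny (so Lemma \ref{B.vs} applies to each), followed by a dimension-counting argument lifting virtual freeness from the graded pieces back to $V_2$. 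An alternative would be to bound $\dim V_2^x$ for noncentral $x \in \g$ directly by adapting the argument in Lemma \ref{B.faithful}(a) to the setting $\z(\g) V_2 = 0$, and combine this with the Short Root Proposition applied to $W = V_1$ (which has $V_1^\n = 0$) to drive the inequality $\dim x^G + \dim V^x < \dim V$ for all toral or square-zero $x \in \g \setminus \m$.
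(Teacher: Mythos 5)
Your decomposition $V = V_1 \oplus V_2$ into the eigenspaces of $\z(\g)$ and the case split on which summand is large is exactly the paper's argument, and your treatment of the first subcase ($\dim V_1 > 4n^2+4n$, apply Lemma \ref{B.faithful} to $V_1$ and note that the stabilizer of $v_1+v_2$ sits inside that of $v_1$) is complete and matches the paper.

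The gap is in the second subcase, which you have not actually proved. The paper disposes of it in one line: when $\dim V_2 > 4n^2$ it applies the result of \cite{GG:large} to the type-$D$ configuration (in effect Lemma \ref{B.vs}) to conclude that the generic stabilizer lands in $\m$. You correctly observe that Lemma \ref{B.vs} as stated requires $\rho|_{V_2}$ to factor through the very special isogeny, and that $\z(\g)V_2 = 0$ only guarantees this for the composition factors of $V_2$, not for $V_2$ itself --- indeed the dual of the $(2n+1)$-dimensional natural module of $\SO_{2n+1}$ is a module with trivial $\z(\g)$-action and no $\g$-fixed vectors on which $\n$ acts nontrivially, so the concern is not vacuous. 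But having identified the obstacle, you only sketch two candidate resolutions (an $\n$-socle filtration plus unspecified ``dimension counting,'' or re-running the inequality $\dim x^G + \dim V^x < \dim V$ directly) without executing either; as written the first would also stumble on graded pieces that are small or have fixed vectors, so it cannot be waved through. Until one of these is carried out --- e.g.\ by reducing to a suitable large subquotient of $V_2$ to which the Long or Short Root Proposition or \cite[Th.~A]{GG:large} applies --- the conclusion in this subcase, and hence the corollary, is not established. (A further point left unaddressed by your plan in this subcase: controlling $\g_{v_2}$ alone gives $\g_v \subseteq \m$, but the ``virtually free'' assertion also requires comparing $\g_{v_1} \cap \g_{v_2}$ with $\ker\drho|_{V_1} \cap \ker\drho|_{V_2}$ when $V_1 \neq 0$ is small, since $V_1$ by itself need not be virtually free.)
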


\begin{proof}  Write $V=V_1 \oplus V_2$ as at the start of this section.   If $\dim V_1 > 4n^2+4n$, then $V_1$
and so $V$ is generically free.   Otherwise $\dim V_2 > 4n^2$ and the group of type $D$ has generic
stabilizer contained in its center by \cite{GG:large}.  
\end{proof}

One can do better by intersecting the generic stabilizers for $V_1$ and $V_2$.

\begin{eg}[natural representation]  \label{B.nat}
Here we treat the natural module, $V:=L(\omega_n)$ for $n \ge 2$.  We have $\dim V = 2n < \dim G$.  As in the proof of Lemma \ref{B.vs}, the image of $\so_{2n+1}$ in $\gl(V)$ is a copy of $\so_{2n}$ which acts on $V$ with generic stabilizer $\so_{2n-1}$, hence $\so_{2n+1}$ acts on $V$ with kernel $\gn$ and generic stabilizer $\gn + \so_{2n-1}$.

Note that $V^\g = V^{\so_{2n}} = 0$.
For $W := \oplus^c V$ with $c > 2n$,  Lemma \ref{B.vs} says that $W$ is virtually free.  (Compare \cite[Example 10.3]{GG:irred} for the case $\car k \ne 2$.)
\end{eg}

\begin{eg}[adjoint representation]  \label{B.adj}
Here we treat $V:=L(\omega_{n-1})$ for $n \ge 3$, the irreducible quotient of the Weyl module $\spin_{2n+1}$.   As in \S\ref{long.sec}, the long root subalgebra $\glong$ is $\spin_{2n}$, and $\spin_{2n+1} = \n + \spin_{2n}$. 
This $V$ 
 is the irreducible quotient of the $\spin_{2n}$-module $\spin_{2n}$.  By \cite[Example 3.4]{GG:large}, the stabilizer in $\spin_{2n}$ of a generic vector $v \in V$
is $\Lie(T)$ for $T$ a maximal torus depending on $v$ and we conclude that $\g_v = \n + \Lie(T)$.  (Alternatively, this representation factors through the very special isogeny and one can find $\g_v$ by pulling back the stabilizer in $\sp_{2n}$ described in Example \ref{C.wedge2}.)
\end{eg}

\subsection*{Restricted irreducible representations, part I}
Let $V$ be a restricted irreducible representation of a group $G$ of type $B_n$ for some $n \ge 2$ over a field $k$ of characteristic 2; we aim to prove Theorem \ref{MT.restricted} for this $G$.  The highest weight $\la = \sum_{i=1}^n c_i \omega_i$ of $V$ has $c_i \in \{ 0, 1 \}$ for all $i$.  If $\la = 0$, equivalently $\ker \drho = \g$, then there is nothing to do. 

If $c_1 \ne 0$, then we are in the case of Example \ref{B.spin.tensor} and Corollary \ref{B.irr1}, so $\ker \rho = 0$ and there is nothing more to do.

\subsection*{Restricted irreducible representations, part II}
We continue now the proof of Theorem \ref{MT.restricted} for type $B$, except now we assume that $c_1 = 0$.  Thus, the highest weight $\la$ belongs to the root lattice and the representation $\rho$ factors through not just $\SO_{2n+1}$ but $\Sp_{2n}$.  To prove Theorem \ref{MT.restricted}, it suffices to show that (1) $\spin_{2n+1}$ does not act virtually freely when $\dim V \le \dim G$ and (2) $\so_{2n+1}$ does act virtually freely when $\dim V > \dim G$.

Assume first that $n > 11$.  
Applying Lemma \ref{B.vs}, we may assume that $\dim V \le 4n^2$, so $\dim V < n^3$. By \cite{luebeck}, $V$ is either the $2n$-dimensional representation $L(\omega_n)$ as in Example \ref{B.nat} or it is $L(\omega_{n-1})$ as in Example \ref{B.adj}.

Next consider $4  \le n \le 11$.  Applying again Lemma \ref{B.vs}, we may assume that $\dim V \le 4n^2$.  Examining the tables in \cite{luebeck}, we find that the only possibilities for $V$ that have not yet been considered are the cases where $n = 4$ or 5, $\la = \omega_{n-2}$, and $\dim V = 48$ or 100 respectively.
 In both cases, $\gn$ is in the kernel of the representation and computer calculations with Magma as in \cite{GG:irred} produces a vector in $V$ with stabilizer $\gn$.  Note that $\dim V > \dim G$ in both cases.
 
 If $n=2$, the only restricted irreducible module is the orthogonal $4$-dimensional module and 
 clearly the result holds. 
 
 If $n=3$,  the only irreducible restricted modules with $c_1=0$
 are the orthogonal module (Example \ref{B.nat}), the module with high weight $\omega_2$ of dimension $14$ (Example \ref{B.adj}), and the module with high weight $\omega_2 + \omega_3$ of dimension $64$.  We verify with a computer that Theorem \ref{MT.restricted} holds in
 the latter case.

\begin{eg}[$\Spin_9$] \label{B3}
For later reference, we examine more carefully the spin representation $V$ of $G = \Spin_9$ when $\car k = 2$.  The stabilizer $G_v$ of a generic vector $v \in V$ is isomorphic to $\Spin_7$ \cite{GG:spin} and is contained in a long root subgroup $\Spin_8$ in $G$ in such a way that the composition $\Spin_7 \subset \Spin_8 \subset \Spin_9 \to \SO_9$ is injective.  (See for example \cite{Raja} for a discussion of the first inclusion in the case $k = \mathbb{R}$.)  In particular, $\z(\spin_7) \ne \z(\spin_9)$.

Now $\z(\spin_9) \subset \z(\spin_8)$, where the terms have dimensions 1 and 2 respectively.   We claim that furthermore $\z(\spin_7) \subset \z(\spin_8)$ so $\z(\spin_7) + \z(\spin_9) = \z(\spin_8)$.  To see this, restrict $V$ to $\Spin_8$ to find a direct sum $V_1 \oplus V_2$ of inequivalent 8-dimensional irreducible representations.  One $V_i$ restricts to be the spin representation of $\Spin_7$ and the other is uniserial with composition factors of dimension 1, 6, 1, corresponding to the inclusion $\SO_7 \subset \SO_8$.  From this we can read off the action of the central $\mu_2$ of $\Spin_7$ on $V$ and we find that it is central in $\Spin_8$, proving the claim.

We have $\spin_8 \cap \m = \z(\spin_8)$ by Lemma \ref{glong} and $\spin_8 \cap \n = \z(\spin_9)$.  Using that $\g_v = \spin_7$, we conclude that $\n_v = 0$ and $\m_v = \z(\spin_7)$.
\end{eg}


\section{Type $C_n$ with $n \ge 3$} \label{C.sec}

\subsection*{Alternating bilinear forms} 
The simply connected group $\Sp_{2n}$ of type $C_n$ can be viewed as the automorphism group of a nondegenerate alternating bilinear form.  We now recall some facts about this correspondence.

\begin{eg} \label{adj.eg}
Suppose $b$ is a nondegenerate alternating bilinear form on a finite-dimensional vector space $V$ over a field $F$ (of any characteristic).  This gives an ``adjoint'' involution $\sigma \!: \End_F(V) \to \End_F(V)$ such that $b(Tv,v') = b(v,\sigma(T)v')$ for all $T \in \End_F(V)$ and $v,v' \in V$.  If $x \in \End_F(V)$ is such that $\sigma(x) = \pm x$, then we find an equation $b(xv,v') = \pm b(v, xv')$.  By taking $v \in \ker x$ or $(\im x)^\perp$  and allowing $v'$ to vary over $V$ we find each of the containments between $\ker x$ and $(\im x)^\perp$, i.e., $\ker x = (\im x)^\perp$.  If additionally $x^2 = x$ (i.e., $x$ is a projection), then $V$ is an orthogonal direct sum $(\ker x) \oplus (\im x)$.

If $x \in \End_F(V)$ is such that $\sigma(x)x = 0$, then $b(xv, xv') = 0$ for all $v, v' \in V$ and we find that $\im x \subseteq (\im x)^\perp$, i.e., $\im x$ is totally singular.
\end{eg}

We may view $\Sp_{2n}$ as the subgroup of $\GL_{2n}$ preserving the bilinear form $b(v,v') := v^\top J v'$ where $J = \left( \begin{smallmatrix} 0 & I_n \\ -I_n & 0 \end{smallmatrix} \right)$.  The Lie algebra $\sp_{2n}$ of $\Sp_{2n}$ consists of matrices $\left( \begin{smallmatrix} A & B \\ C & -A^\top \end{smallmatrix} \right)$ for $A, B, C \in \gl_n$ such that $B^\top = B$ and $C^\top = C$.  In the notation of Example \ref{adj.eg}, $\sigma(g) = -Jg^\top J$ and $\sp_{2n}$ consists of those $x \in \gl_{2n}$ such that $\sigma(x) + x = 0$.  (In particular, $\ker x = (\im x)^\perp$ for $x \in \sp_{2n}$.   If moreover $\car k = 2$ and $x^{[2]} = 0$, then $\sigma(x) x  = x^{[2]} = 0$, so $\im x$ is totally singular.)

The group $\GSp_{2n}$ of similarities of $b$ is the sub-group-scheme of $\GL_{2n}$ generated by $\Sp_{2n}$ and the scalar transformations.  Its Lie algebra consists of those $x \in \gl_{2n}$ such that $\sigma(x) + x \in kI_{2n}$, i.e., $x$ of the form $\left( \begin{smallmatrix} A & B \\ C & \mu I_n -A^\top \end{smallmatrix} \right)$ for $A, B, C \in \gl_n$ and $\mu \in k$, where $B^\top = B$ and $C^\top = C$.  Then $\sp_{2n} \subset \gsp_{2n}$, the quotient $\GSp_{2n} / \Gm \cong \Sp_{2n} / \mu_2$ is the adjoint group $\PSp_{2n}$, and the natural map $\gsp_{2n} \to \psp_{2n}$ is surjective.

The preceding two paragraphs apply to any field $k$; we now explicitly assume $\car k = 2$ and describe the toral elements in $\gsp_{2n}$, i.e., those $x$ such that $x^2 = x$.  As such an $x$ is a projection, it gives a decomposition of $k^{2n}$ as a direct sum of vector spaces $(\ker x) \oplus (\im x)$.  If $x$ belongs to $\sp_{2n}$, then this is an orthogonal decomposition as in Example \ref{adj.eg} where the restrictions of $b$ to $\ker x$ and $\im x$ are nondegenerate.  Otherwise, $\sigma(x) + x = I_{2n}$, so
\[
b(xv, xv') = b((I_{2n} - \sigma(x))v, xv') = b(v, xv') - b(v, x^2 v') = 0  \quad (v, v' \in V).
\]
That is, $\im x$ is totally isotropic.  Analogously, $1 - x \in \gsp_{2n}$ is toral and so $\ker x = \im(1-x)$ is also totally isotropic.  In sum, we find that $\ker x$ and $\im x$ are maximal totally isotropic subspaces.

\subsection*{Dimension bounds}
For the remainder of this section, we set $G = \Sp_{2n}$ with $n \ge 3$ over a field $k$ of characteristic 2. 
We first make some remarks about nilpotent elements of square $0$.

\begin{lem} \label{spsq}
Let $x \in \sp_{2n}$ for $n \ge 3$ have $x^{[2]} = 0$ and rank $r > 0$.  Then
 $\dim x^{\Sp_{2n}} \le r(2n+1)-r^2$.
\end{lem}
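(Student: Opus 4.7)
My plan is to bound $\dim x^{\Sp_{2n}}$ via the $\Sp_{2n}$-equivariant image map. Since $\car k = 2$, the relation $\sigma(x) = x$ together with $\sigma(x)x = x^{[2]} = 0$ shows by Example \ref{adj.eg} that $\im x$ is a totally isotropic $r$-subspace of the natural module $k^{2n}$. Let $\mathcal{L}_r$ denote the variety of totally isotropic $r$-subspaces. The map $\phi \!: x^{\Sp_{2n}} \to \mathcal{L}_r$, $x' \mapsto \im x'$, is $\Sp_{2n}$-equivariant, and as $\Sp_{2n}$ acts transitively on $\mathcal{L}_r$ (Witt's theorem), $\phi$ is surjective with equidimensional fibers, yielding
\[
\dim x^{\Sp_{2n}} \;=\; \dim \mathcal{L}_r + \dim \phi^{-1}(W)
\]
for any $W \in \mathcal{L}_r$.

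A standard parameter count---$r$ linearly independent, mutually $b$-orthogonal vectors modulo $\GL_r$---gives $\dim \mathcal{L}_r = 2nr - r^2 - \binom{r}{2} = \frac{r(4n - 3r + 1)}{2}$.

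For the fiber, I fix $W = \langle e_1,\ldots,e_r\rangle$ in the standard symplectic basis and write $x' \in \phi^{-1}(W)$ in block form as $x' = \left(\begin{smallmatrix} A & B \\ C & D \end{smallmatrix}\right)$ with $n \times n$ blocks. The condition $x' \in \sp_{2n}$ forces $B = B^\top$, $C = C^\top$, and $D = A^\top$, while $\im x' \subseteq W$ kills every row of $x'$ below the $r$-th. Taken together these force $A = C = D = 0$ and $B = \left(\begin{smallmatrix} B_2 & 0 \\ 0 & 0 \end{smallmatrix}\right)$ with $B_2$ an arbitrary symmetric $r \times r$ matrix. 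In particular the condition $(x')^{[2]} = 0$ is automatic, so $\phi^{-1}(W)$ is contained in an affine space of dimension $\binom{r+1}{2} = \frac{r(r+1)}{2}$.

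Summing yields $\dim x^{\Sp_{2n}} \le \frac{r(4n-3r+1) + r(r+1)}{2} = r(2n+1-r)$, as desired. There is no substantial obstacle in this plan; identifying the correct equivariant map is the main conceptual step, after which the fiber computation is routine linear algebra on block matrices.
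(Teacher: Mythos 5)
Your argument is correct and is essentially the paper's first proof: both fix a totally isotropic $r$-space $W$, identify $\{y \in \sp_{2n} : y^{[2]}=0,\ \im y \subseteq W\}$ with symmetric $r\times r$ matrices of dimension $r(r+1)/2$, and add the dimension of the isotropic Grassmannian $G/P$; you package this as the equivariant fibration $x^G \to \mathcal{L}_r$ while the paper uses the map $G \times C \to \g$ with fibers of dimension at least $\dim P$, which is the same count. (The paper also records a second proof via explicit centralizer formulas from Hesselink and Liebeck--Seitz, which you do not need.)
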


\begin{proof}[Proof \#1] 
Fix a totally singular $r$-dimensional subspace $W$ of the natural module $V$ and let $P$ be the
maximal parabolic subgroup of $\Sp_{2n}$ stabilizing $W$.  
Let $C$ be the set of elements in $\g$ with $y^{[2]}=0$ and $\im y \subseteq W$.
As above,  $\ker y =(\im y)^{\perp}$ 
and in particular $\ker y \supseteq W^{\perp}$.   
So we see that $C$ is the center of the nilpotent radical of $\Lie(P)$ and can
be identified with the space of $r$-by-$r$ symmetric matrices.  In particular,
$\dim C = r(r+1)/2$.

Now let $x \in \g$ with $x^{[2]}=0$ and $\im x =W$.  Consider the map
$f: G \times C \rightarrow \g$ given by $f(g,y)=\Ad(g)y$.   Every fiber has
dimension at least $\dim P$, since $f(P \times C) = C$.   Thus the dimension
of the image of $f$ is at most $\dim C + \dim G/P = r(r+1) + 2r(n-r) = r(2n+1) - r^2$.
Since $x^G$ is contained in the image of $f$, we obtain the same inequality 
for $\dim x^G$.

We remark that, by Richardson, the Levi subgroup of $P$ has a dense orbit on $C$, whence the largest
such class  has dimension  precisely  $r(2n+1) - r^2$.   Note also that the fact that $\im x$ is totally
singular shows that $x$ is a sum
 of commuting root elements.   If $r$ is odd, then we can always
take $x$ to be a sum of commuting long root elements.   If $r$ is even, then we can always take
$x$ to be a either a commuting sum of long root elements (the larger class) or a sum of commuting
short root elements.   Note that the smaller class is contained in the closure of the larger class. 
It is well known (cf.~\cite[Chap.~4]{LiebeckSeitz}) that if $x \in \g$ has even rank   and $x^{[2]}=0$, then
$x$ is an element of $\so_{2n}$.  
\end{proof}

\begin{proof}[Proof \#2]
There are two possibilities for the conjugacy class of $x$ in case $r$ is even, see \cite[4.4]{Hesselink} or \cite[p.~70]{LiebeckSeitz}.  We focus on the larger class; regardless of the parity of $r$ we may assume that the restriction of the natural module to $x$ includes a 2-dimensional summand denoted by $V(2)$ in \cite{LiebeckSeitz}.  For this $x$, the function denoted by $\chi$ in the references amounts to $1\mapsto0$ and $2 \mapsto 1$.  The formulas in these references now give that the centralizer of $x$ in $\Sp_{2n}$ has dimension
\[
\sum_{i=1}^r (2i-1) + \sum_{i=r+1}^{2n-r} i
 = 2n-r + \binom{2n-r}{2} + \binom{r}{2}.\qedhere
\]
\end{proof}

The Lie algebra $\g = \sp_{2n}$ has derived subalgebra $\m = \so_{2n}$ of codimension $2n$; it is the unique maximal $G$-invariant ideal in $\g$.  The short root subalgebra $\n = \ker \dpi$ has codimension 1 in $\m$ and is $[\m, \m]$.  The quotient $\g/\n$ is the Heisenberg Lie algebra from section \ref{heis.sec}.  The unique maximal ideal $\m$ also contains $\z(\g)$ (dimension 1).  The center $\z(\g)$ is  contained in $\n$ if and only if $n$ is even.

\begin{lem} \label{C.gen}
Let $G = \Sp_{2n}$ for some $n \ge 3$ over a field $k$ of characteristic $2$, and let $x \in \sp_{2n}$ be noncentral.
\begin{enumerate}
\item \label{C.gen.tor} If $x$ is toral, 
then 
 $\max \{ 4, \lceil {n/s} \rceil \}$ conjugates of $x$ suffice to generate a subalgebra containing $\n$, where $2s$ is the dimension of the smallest eigenspace.
 
\item \label{C.gen.even} If $x^{[2]}=0$
and $x$ has even rank $2s > 0$ or odd rank $2s + 1 \ge 3$, then 
 $\max \{ 4, \lceil{n/s} \rceil \}$ conjugates of $x$ generate a subalgebra containing $\n$.

\item \label{C.gen.1} If $x^{[2]} = 0$ and $x$ has rank $1$, then $\max\{ 8, 2n \}$ conjugates of $x$ generate a subalgebra containing $\gn$.
\end{enumerate}
\end{lem}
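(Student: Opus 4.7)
The strategy for all three parts is to choose $e$ conjugates $x_1, \ldots, x_e$ of $x$ whose actions on the natural module $V = k^{2n}$ together cover $V$ well enough that iterated Lie brackets span a subalgebra containing $\n$. The workhorse for the nilpotent cases is the commutator identity
\[
[x_u, x_v] = b(u,v) \bigl(u \otimes b(\cdot, v) + v \otimes b(\cdot, u)\bigr)
\]
for rank-one elements $x_w := w \otimes b(\cdot, w) \in \sp_{2n}$, together with its analogues for higher-rank nilpotents and for the interaction of two toral projections.

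For part \eqref{C.gen.1}, any rank-$1$ element $x$ with $x^{[2]} = 0$ has the form $x_v$ for some nonzero $v \in V$: its image is $1$-dimensional and totally singular by Example~\ref{adj.eg}, and the symmetry condition $\sigma(x) = x$ in characteristic $2$ then forces $x = v \otimes b(\cdot, v)$. I would choose $e = \max\{8, 2n\}$ vectors $v_1, \ldots, v_e$ that span $V$ and are in sufficiently general position with respect to $b$, so that the commutators $[x_{v_i}, x_{v_j}]$ are rank-$\le 2$ elements of $\m$ that, together with iterated brackets, sweep out enough short root subalgebras to cover $\n$. The lower bound of $8$ handles the small-$n$ regime where $2n$ spanning vectors do not yield enough independent nonzero brackets.

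For parts \eqref{C.gen.tor} and \eqref{C.gen.even}, the element $x$ determines a decomposition of $V$ into a smaller subspace of dimension $2s$ or $r \in \{2s, 2s+1\}$ -- the smaller eigenspace in the toral case, or $\im x$ in the square-zero case -- and a larger complement. A conjugate $\Ad(g) x$ has the corresponding $g$-translated decomposition. The plan is to choose $e = \max\{4, \lceil n/s \rceil\}$ conjugates whose ``small'' subspaces jointly span $V$, which is feasible because $2s \cdot \lceil n/s \rceil \ge 2n$. Iterated Lie brackets then produce rank-$\le 2$ elements analogous to those in part \eqref{C.gen.1}, and a case analysis verifies that these span enough short root elements to generate $\n$. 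The fallback bound $4$ covers small ratios $n/s \le 3$, where fewer conjugates fail to give enough independent commutators.

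The main obstacle is combinatorial: verifying in each part that the chosen conjugates are in sufficiently general position that no key bracket degenerates -- for instance $[x_u, x_v] = 0$ whenever $b(u,v) = 0$ -- and that the resulting span of brackets exhausts a generating set for $\n$ rather than merely a proper $G$-invariant subalgebra. This should reduce to explicit dimension counts on the rank-$\le 2$ elements arising from commutators, plus verifications that the small-$n$ sanity cases work out.
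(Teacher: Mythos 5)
There is a genuine gap: your proposal is a plan rather than a proof. Every step that actually carries the content of the lemma --- that the brackets of your chosen conjugates ``sweep out enough short root subalgebras to cover $\n$,'' that ``a case analysis verifies that these span enough short root elements to generate $\n$,'' that the small-$n$ cases ``work out'' --- is asserted, not established. Showing that the generated subalgebra contains all of $\n$ (and not merely some proper $G$-stable or $T$-stable subalgebra) with the \emph{specific} count $\max\{4,\lceil n/s\rceil\}$ is exactly the hard part, and a direct combinatorial attack via rank-one commutators would be quite laborious to make rigorous. Your setup is not wrong --- rank-one square-zero elements of $\sp_{2n}$ really are of the form $v\otimes b(\cdot,v)$ and your commutator identity is correct --- but nothing after the setup is actually done.

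You also miss the structural reductions that make the paper's proof short. First, toral elements and square-zero elements of even rank lie in $\m=\so_{2n}$ (see the remarks in the proof of Lemma~\ref{spsq}), so parts \eqref{C.gen.tor} and the even-rank half of \eqref{C.gen.even} follow immediately from the type-$D$ generation result \cite[Prop.~10.4]{GG:large} applied to $\so_{2n}$, whose derived subalgebra is $\n$. Second, a square-zero element of odd rank $2s+1\ge 3$ has a square-zero element of rank $2s$ in its orbit closure, and generation by $e$ conjugates passes from the degeneration back to $x$; this disposes of the rest of \eqref{C.gen.even}. Third, for rank $1$ one does not choose $2n$ spanning vectors: one picks a single conjugate $y$ of $x$ with $(x+y)^{[2]}=0$ of rank $2$ and applies part \eqref{C.gen.even} to $x+y$, which is why the bound is exactly $2\cdot\max\{4,n\}=\max\{8,2n\}$ --- a factor your proposal does not explain. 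To repair your write-up, either carry out these reductions or genuinely execute the combinatorial spanning argument; as it stands the central claims are unproved.
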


\begin{proof}  If $x^{[2]}$ is toral, or is nilpotent with even rank, then $x$ is in $\m = \so_{2n}$ (cf.~the remarks in the proof of Lemma \ref{spsq}), and the claim is \cite[Prop.~10.4]{GG:large}.

If $x^{[2]} = 0$ and $x$ rank $2s + 1 \ge 3$, the description of the classes given in the proof of Lemma \ref{spsq} shows that the closure of $x^{\Sp_{2n}}$ contains a $y$ of rank $2s$ such that $y^{[2]} = 0$.  As $\max \{ 4, \lceil n/s \rceil \}$ conjugates of $y$ suffice to generate a subalgebra containing $\gn$, the same holds for $x$. 

If $x^{[2]} = 0$ and $x$ has rank 1, we choose $y$ conjugate to $x$ such that $(x+y)^{[2]}=0$ and $x+y$ has rank $2$.  
By \eqref{C.gen.even},  $\max \{ 4, n \}$ conjugates of $x + y$  generate a subalgebra containing $\n$, whence \eqref{C.gen.1} holds.
\end{proof}

\begin{prop}  \label{C.faithful}
Let $(V,\rho)$ be a representation of $G = \Sp_{2n}$ or $\PSp_{2n}$ with $n \ge 3$ over a field $k$ of characteristic $2$.
If $V^\gn = 0$ and 
\[
\dim V > 
\begin{cases}
48 & \text{if $G = \Sp_6$ or $\PSp_6$ (i.e., $n = 3$)} \\
72 & \text{if $G = \Sp_8$;} \\
80 & \text{if $G = \PSp_8$; and} \\
6n^2 + 6n&\text{if $n \ge 5$,}
\end{cases}
\]
then  $\g$ acts virtually freely on $V$.
\end{prop}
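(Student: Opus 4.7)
The plan is to apply Lemma \ref{ineq.spin}(1) with $\lsub := \ker \drho$ (which by Lemma \ref{g.submod} and the hypothesis $V^\gn = 0$ lies in $\z(\g)$) and verify inequality \eqref{ineq.mother} for every toral or nilpotent $x \in \g \setminus \ker \drho$. I would partition such elements into three groups relative to $\gshort = \m = \so_{2n}$, and handle each separately. (For $G = \PSp_{2n}$, I would lift representations to $\Sp_{2n}$ and apply the $\Sp_{2n}$ results throughout.)

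The first group is $x \in \gshort \setminus \z(\gshort)$: these are handled directly by the Short Root Proposition \ref{gshort.prop}, whose hypothesis $V^\gn = 0$ holds and whose numerical threshold ($30$ for $n = 3$, $4n^2$ for $n \ge 4$) is comfortably met. Its conclusion $(\gshort)_v \subseteq \z(\gshort)$ excludes every such $x$ from $\g_v$ for generic $v$. The second group is $x \in \g \setminus \gshort$: by the remarks around Lemma \ref{spsq}, every toral element and every square-zero nilpotent of even rank already lies in $\m$, so this group consists exactly of square-zero nilpotents of odd rank $r = 2s+1 \ge 1$. For these I would invoke Lemma \ref{C.gen}(2)--(3) to produce $e$ conjugates generating a subalgebra $\s \supseteq \gn$, with $e = \max\{4, \lceil n/s \rceil\}$ for $r \ge 3$ and $e = \max\{8, 2n\}$ for $r = 1$; since $V^\gn = 0$ forces $V^\s = 0$, the criterion \eqref{ineq.gen} reduces to $e \cdot \dim x^G < \dim V$, with $\dim x^G \le 2(2s+1)(n-s)$ by Lemma \ref{spsq}. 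The binding case is $r = 3$, which gives product $\max\{4, n\} \cdot 6(n-1)$: this equals $6n(n-1)$ for $n \ge 5$ and is absorbed by $6n^2 + 6n$, while for $n = 3, 4$ the competing ranks $1, 3, 5$ must be tallied individually to produce the sharper thresholds $48, 72, 80$.

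The third group, $x \in \z(\gshort) \setminus \ker \drho$, is residual: after the preceding two steps one already has $\g_v \subseteq \z(\gshort)$ generically, and only a piece of dimension at most $2$ remains to eliminate (namely $\z(\g)$ together with possibly one extra central class from the $D_n$ structure when $n$ is even). Each such $x$ is semisimple, so I would apply Lemma \ref{ineq.spin}(2): $\drho(x)$ is a nonzero semisimple endomorphism commuting with $\drho(\m)$, and the constraint $V^\gn = 0$ prevents $V^x$ from filling out $V$. The slight discrepancy between the $\Sp_{2n}$ and $\PSp_{2n}$ bounds at $n = 3, 4$ reflects precisely that $\ker \drho$ is one dimension smaller in the $\PSp_{2n}$ case, which enlarges this residual piece by one.

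The main obstacle is the second step: balancing the number of conjugates $e$ against $\dim x^G$ across all odd ranks forces the quadratic threshold $6n^2 + 6n$ through the rank-$3$ case, and for the small values $n = 3, 4$ one must simultaneously track the rank-$1$, rank-$3$, and rank-$5$ contributions (and, via the third step, the $\Sp$-versus-$\PSp$ distinction) to match the sharper stated bounds.
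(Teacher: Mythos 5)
Your treatment of $\Sp_{2n}$ follows the paper's route closely and is essentially correct: reduce to $\m=\so_{2n}$ via the Short Root Proposition for toral and even-rank square-zero elements, then balance $e\cdot\dim x^G$ against $\dim V$ for odd-rank square-zero nilpotents using Lemmas \ref{spsq} and \ref{C.gen}, with $r=3$ as the binding case. (Two small slips there: square-zero elements of $\sp_{2n}$ have rank at most $n$, so rank $5$ does not occur for $n=3,4$; and the sharp value $72$ for $\Sp_8$ comes from $r=3$ giving $\max\{4,4\}\cdot 6(n-1)=72$, not from any rank-$5$ class.)

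The genuine gap is your handling of $\PSp_{2n}$. ``Lifting to $\Sp_{2n}$ and applying the $\Sp_{2n}$ results throughout'' does not work, because in characteristic $2$ the map $\sp_{2n}\to\psp_{2n}$ is not surjective: it kills the one-dimensional center, so its image has codimension $1$, and $\psp_{2n}$ contains toral elements with no preimage in $\sp_{2n}$. Concretely, these are the images of idempotents $x\in\gsp_{2n}$ with $\sigma(x)+x=I_{2n}$, i.e., projections onto maximal totally isotropic subspaces (the paper describes these explicitly before the proposition). These classes must be excluded from the generic stabilizer by a separate argument; the paper does this by showing the rank-$n$ square-zero nilpotent $y$ lies in the closure of $x^{\Gm G}$, invoking the generation lemma to get $e=4$, and computing $\dim x^{\GSp_{2n}}=n^2+n$, which yields the contribution $4n^2+4n$. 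This is precisely the source of the threshold $80=4\cdot 4^2+4\cdot 4$ for $\PSp_8$, so your explanation of the $72$-versus-$80$ discrepancy (a one-dimensional change in $\ker\drho$ enlarging the residual central piece) is incorrect, and your ``Group 3'' residual step cannot absorb these elements, since they are noncentral toral elements lying entirely outside $\im(\sp_{2n}\to\psp_{2n})$. Without this extra case your argument proves nothing beyond the image of $\sp_{2n}$ and hence does not establish the $\PSp_{2n}$ half of the proposition.
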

 
In the statement, in case $G = \PSp_{2n}$, $\rho$ induces a representation of $\Sp_{2n}$ and so it still makes sense to speak of the action of $\gn \subset \sp_{2n}$ on $V$, whence $\g$ acts virtually freely on $V$.

\begin{proof}  
For noncentral $x \in \g$ such that $x^{[2]} \in \{ 0, x\}$, we check  that $x$ does not meet $\g_v$ for generic $v \in V$, and therefore that $\g_v \subseteq \z(\g)$.

\subsubsection*{Case $G = \Sp_{2n}$ for $n \ge 4$}
Suppose first that $x^{[2]} = x$, or $x^{[2]} = 0$ and $x$ has even rank. Then $x$ belongs to $\m = \so_{2n}$ and is not itself central in $\m$.  The Short Root Proposition \ref{gshort.prop} gives that for $\dim V > 4n^2$, $x$ does not meet $\m_v$ for generic $v \in V$.

We may assume that $x^{[2]} = 0$ and $x$ has odd rank $r = 2s+1$, where $\dim x^G$ is bounded as in Lemma \ref{spsq}.   We find an   $e > 0$ such that $e$ conjugates of $x$ generate a subalgebra of $\g$ containing the image of $\n$ and such that $e \cdot \dim x^G \le b(G)$, where $b(G)$ is the right side of the inequality in the statement.  This verifies by Lemmas \ref{ineq} and \ref{ineq.spin}\eqref{mother.lie}  that $x$ does not meet $\g_v$ for generic $v \in V$.
 
If $r = 1$, then applying Lemma \ref{C.gen}\eqref{C.gen.1} gives that $e \cdot \dim x^G \le 4n \max \{ 4, n \}$.

If $r = 3$, then $\dim x^G \le 6(n-1)$.  Applying Lemma \ref{C.gen}\eqref{C.gen.even} gives $e \dim x^G \le 6(n-1) \max\{4,n\}$.

Suppose $r \ge 5$ (so also $n \ge 5$). If $s \ge n/4$, then $4$ conjugates suffice to generate a sublagebra containing $\n$.
As $\dim x^G \le 2r(n-s)$, it suffices to bound $8r(n-s)$.  This is maximized at $r = n + \frac12$ and so
we obtain a bound of $4n^2+4n$.   

If $s < n/4$, then it suffices to bound $(n/s+1)(2r)(n-s)  = (n^2-s^2)(2r/s) \le 5n^2 - 5s^2 
\le 5n^2-20$.   

\subsubsection*{Case $G = \Sp_6$} 
As in the $n \ge 4$ case, we may reduce to the case where $x^{[2]} = 0$ and $x$ has odd rank.

If $x$ has rank $3$,
then $\dim x^G \le 12$ and we get a bound of $48$.  If $x$ is a long root element, then $6$ conjugates suffice to 
generate $\sp_6$ (by reducing to the rank $2$ case and using generation by $3$ conjugates).   Then $6 \dim x^G = 36$
and the result holds. 

\subsubsection*{Case $G = \PSp_{2n}$ for $n \ge 3$}
For $\GSp_{2n} := (\Sp_{2n} \times \Gm)/\mu_2$, the group of similarities of the bilinear form, we have a natural surjection $\GSp_{2n} \to \PSp_{2n}$ whose differential $\gsp_{2n} \to \psp_{2n}$ is surjective and has central kernel $k$, the scalar matrices.  Let $x \in \gsp_{2n}$ be noncentral such that $x^{[2]} \in \{ 0, x \}$.  If $x$
belongs to $\sp_{2n}$, we have already verified that $x$ cannot lie in $(\sp_{2n})_v$ for generic $v \in V$.  So assume that $x$ is toral and does not belong to $\sp_{2n}$, i.e., is the projection on a maximal totally isotropic subspace as in the paragraph preceding the statement.  Up to conjugacy we may assume that $x$ is $\left( \begin{smallmatrix} 0_n & 0_n \\ 0_n & I_n \end{smallmatrix} \right)$.  The nilpotent linear transformation $y := \left( \begin{smallmatrix} 0_n & I_n \\ 0_n & 0_n \end{smallmatrix} \right)$ belongs to $\sp_{2n}$, has rank $n$ and satisfies $y^{[2]}=0$.   Note that $x + ty$ is conjugate to $x$ for any scalar $t$. 
It follows that $y$ is in the closure of $x^{\Gm G}$.   

Lemma \ref{C.gen} gives that 4 conjugates of $y$ suffice to generate a subalgebra of $\sp_{2n}$ containing $\gn$.  Take $M = \n = [\so_{2n}, \so_{2n}]$ and $N = \z(\n)$, so $M/N$ is the irreducible representation $L(\omega_{n-1})$ of $G$ whose highest weight is the highest short root.  As 
\[
\dim \gsp_{2n}/M = 2n+1 < 2n^2-n-1 \le \dim M/N,
\]
\cite[Lemma 4.3(3)]{GG:large} says that 4 conjugates of $x$ also suffice to generate a subalgebra of $\sp_{2n}$ containing $\gn$.  On the other hand, $\dim x^{\GSp_{2n}} = n^2 + n$, giving $e \cdot \dim x^{\GSp_{2n}} \le 4n^2 + 4n$.  Therefore, it suffices to take $b(\PSp_{2n}) = \max \{ b(\Sp_{2n}), 4n^2 + 4n \}$.
\end{proof}

\subsection*{Restricted irreducible representations}
Let $V$ be a restricted irreducible representation of an algebraic group $G$ of type $C_n$ with $n \ge 3$ over $k$ of characteristic 2; we aim to prove Theorem \ref{MT.restricted} for this $G$.  The highest weight $\la = \sum_{i=1}^n c_i \omega_i$, numbered as in Table \ref{luebeck.table}, has $c_i \in \{ 0, 1 \}$ for all $i$.  If $\la = 0$, equivalently $\ker \drho = \g$, then there is nothing to do.

\begin{eg}[``spin'' representation]  \label{C.spin}
Put $G' := \Spin_{2n+1}$.  Composing the very special isogeny $\Sp_{2n} \to G'$ with the (injective) spin representation of $G'$ of dimension $2^n$ yields the irreducible representation $(V,\rho) = L(\omega_1)$ of $\Sp_{2n}$.  For $n \ge 7$, $\dim V > \dim G$ and the stabilizer $(\g')_v$ of a generic $v \in V$ is zero \cite[Th.~1.1]{GG:spin}, so $(\sp_{2n})_v = \ker \drho$.

For $3 \le n < 7$, $\dim V < \dim \Sp_{2n}$ and one can check using a computer that there exist vectors in $V$ whose stabilizer is $\ker \drho$, therefore, $\sp_{2n}$ acts virtually freely on $V$.  Alternatively, for $n = 3, 5, 6$ and generic $v \in V$, $(\Spin_{2n+1})_v$ is $G_2$, $\SL_5 \rtimes \Z/2$, $(\SL_3 \times \SL_3) \rtimes \Z/2$ respectively \cite{GG:spin}; each of these has a Lie algebra that is a direct sum of simples, and so it can not meet the solvable ideal that is the image of $\sp_{2n}$ in $\spin_{2n+1}$.  For these $n$, $\dim V < \dim G$.

For $n$ even, the representation factors through $\PSp_{2n}$.  For $n \ge 8$ and $n = 6$, $\psp_{2n}$ acts virtually freely as in the preceding two paragraphs. For $n = 4$, the image of $\psp_8$ in $\spin_9$ is the maximal proper $\Spin_9$-submodule, for which a generic vector $v$ in the spin representation has a 1-dimensional stabilizer (Example \ref{B3}), i.e., $\dim (\psp_8)_v / \ker \drho = 1$, i.e., $\psp_8$ does not act virtually freely.
\end{eg}

Therefore, we have addressed the cases where $\la_s = 0$, i.e., $\la \in \{ 0, \omega_1 \}$, so we now assume that $\la_s \ne 0$, whence, $V^{\gn} = 0$.  As this excludes all the representations in Table \ref{irred.vfree},
our task is to prove that (1) $\sp_{2n}$ does not act virtually freely if $\dim V \le 2n^2 + n$ and (2) $\psp_{2n}$ acts virtually freely if $\dim V > 2n^2 + n$.

\begin{eg}[natural representation]  \label{C.nat}
Here we treat $V:=L(\omega_n)$, the natural representation of $\Sp_{2n}$, which has $\dim V = 2n < \dim \Sp_{2n}$.  In this case $\Sp_{2n}$ is transitive on all nonzero vectors and so we see
the generic stabilizer is the derived subalgebra of the maximal parabolic subalgebra that is the stabilizer
of a $1$-dimensional space. 
\end{eg}

\begin{eg}[``$\wedge^2$'' representation]  \label{C.wedge2} 
Let $V:=L(\omega_{n-1})$, which has $\dim V < \dim G$.  We will show that the generic stabilizer
in $\Sp_{2n}$ is $A_1 \times \cdots \times A_1$ (more precisely the stabilizer of
$n$ pairwise orthogonal two-dimensional non-degenerate subspaces), so $V$ is not virtually free for $\sp_{2n}$ nor for $\psp_{2n}$.

Let $M$ be the natural module for $\Sp_{2n}$, of dimension $2n$.
Let $W = \wedge^2 M$ which we can identify with the set of
skew adjoint operators on $M$, i.e.,  the linear maps $T\!:M \to M$
with $(Tv,v)=0$ for all $v \in M$.    If $n$ is odd, then 
$W \cong k \oplus V$.  If $n$ is even, then $W$ is uniserial with $1$-dimensional
socle and head with $V$ as the nontrivial composition factor. 
The unique submodule  $W_0$ of codimension $1$ in each case consists of those
elements $T$ with reduced trace equal to $0$.  

It follows as in \cite{GoGu} or \cite[Example 8.5]{GG:simple} that a generic element $T$ of $W$ 
(or $W_0$) is semisimple
and has $n$ distinct eigenvalues each of multiplicity $2$.   It follows
that, as a group scheme, the stabilizer of such an element is 
as given and so the result holds for $n$ odd.

Assume that $n \ge 4$ is even.  Note that a generic element $T$
of $W$ also has the property that the $n(n-1)/2$ differences of
the eigenvalues of $T$ are distinct.   It follows easily that the stabilizer
of such an element in $V=W_0/W^{\Sp_{2n}}$ is the same as in $W_0$
and the result follows.
\end{eg}

If $n > 11$ and $\dim V \le 6n^2 + 6n$, then $V$ is $L(\omega_n)$ or $L(\omega_{n-1})$ by \cite{luebeck}, so we may assume $3 \le n \le 11$.  Therefore we are reduced to considering Tables A.32--A.40 in \cite{luebeck}, one table for each value of $n$.  In those tables, the representations $V$ not already handled by Prop.~\ref{C.faithful} or Examples \ref{C.spin}, \ref{C.nat}, \ref{C.wedge2}  are: $L(\omega_{n-2})$ (``$\wedge^3$'') for $\Sp_{2n}$ with $n = 4, 5$ and the representation $L(\omega_1 + \omega_3)$ of $\PSp_6$ of dimension 48.  A computer check verifies that $\g$ acts virtually freely on these representations.

As we have verified Theorem \ref{MT.restricted} for groups of type $C$ (in this section), $B$ (in section \ref{B.sec}), and $F_4$ and $G_2$ (in section \ref{FG.sec}), the proof is complete.$\hfill\qed$

\section{A large representation that is not virtually free} \label{eg.sec}

In this section, $G$ is a simple and simply connected algebraic group over $k$ and $\car k$ is special for $G$. The main theorem of \cite{GG:large} includes a statement of the following type: \emph{If $\car k$ is not special and $V$ is a $G$-module with a subquotient $X$ such that $X^{[\g, \g]} = 0$ and $\dim X$ is big enough, then $\g$ acts virtually freely on $V$.}   Example \ref{large.eg} below shows that such a result does not hold when $\car k$ is special.

 As in \S\ref{struct}, we put $\gn$ for the kernel of the (differential of the) very special isogeny in $\g = \Lie(G)$.
 
 \begin{lem} \label{ann}
 For any representation $V$ of any Lie algebra $\gd$, we have: $(V/V^{\gd})^\gd \subseteq V^{[\gd, \gd]} / V^\gd$.  In particular, if $\gd = [ \gd, \gd]$, then $(V/V^{\gd})^\gd = 0$.
 \end{lem}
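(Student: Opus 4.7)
The plan is to prove the inclusion by a direct diagram chase: lift an element of $(V/V^\gd)^\gd$ to $V$ and check that the commutator $[\gd,\gd]$ annihilates it.

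First I would observe that $V^\gd \subseteq V^{[\gd,\gd]}$, since any $v \in V^\gd$ is killed by every element of $\gd$, in particular by every bracket $[x,y] \in \gd$. Thus $V^{[\gd,\gd]}/V^\gd$ genuinely makes sense as a subspace of $V/V^\gd$, so the statement is well-formed.

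Now take $v \in V$ whose image $\bar v \in V/V^\gd$ is $\gd$-invariant. By definition this means $xv \in V^\gd$ for every $x \in \gd$. For any $x, y \in \gd$ I would then compute
\[
[x,y]\cdot v \;=\; x(yv) - y(xv) \;=\; 0 - 0 \;=\; 0,
\]
where both terms vanish because $xv, yv \in V^\gd$ are killed by every element of $\gd$. Hence $v \in V^{[\gd,\gd]}$, so $\bar v$ lies in $V^{[\gd,\gd]}/V^\gd$, establishing the claimed inclusion.

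For the final clause, if $\gd = [\gd,\gd]$ then $V^{[\gd,\gd]} = V^\gd$, so the right-hand side collapses to $0$, giving $(V/V^\gd)^\gd = 0$. The argument is entirely elementary; there is no real obstacle beyond writing down the two-line commutator computation, so this is more of a bookkeeping lemma than a result requiring a genuine idea.
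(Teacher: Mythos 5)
Your proof is correct and follows exactly the same route as the paper's: lift $\bar v$ to $v$ with $xv \in V^{\gd}$ for all $x$, then compute $[x,y]v = x(yv) - y(xv) = 0$. The added remarks on well-formedness and the final clause are fine but the core argument is identical.
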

 
 \begin{proof}
Suppose for some $v \in V$ that $zv \in V^\gd$ for all $z \in \gd$.  For $x, y \in \gd$, we have $[x,y]v = x(yv) - y(xv) = 0$, i.e., $v \in V^{[\gd, \gd]}$.
 \end{proof}

\begin{lem} \label{n.nvfree}
For $G$ simple and simply connected over a field $k$ such that $\car k$ is special, we have:
\begin{enumerate}
\item \label{n.vfree.vfree} $\gn$ is not virtually free as an $\gn$-module.  
\item \label{n.vfree.nz} $\gn / \gn^{[\g, \g]} \ne 0$ and $(\gn / \gn^{[\g, \g]})^{[\g, \g]} = 0$.
\end{enumerate}
\end{lem}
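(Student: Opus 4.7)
I plan to prove both parts by assembling a few structural facts about $\gn$ inside $\g$ and then combining them with Lemma \ref{ann} and an elementary centralizer computation.

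First, I establish three preliminary facts. (a) $\gn$ is non-abelian in every case: for $G = G_2$ or $F_4$ it is simple (the simple quotient of $\sl_3$ or $\spin_8$), for $G$ of type $B_n$ it is the Heisenberg algebra of \S\ref{heis.sec}, and for $G$ of type $C_n$ it is $[\so_{2n},\so_{2n}]$. (b) $\gn \subseteq [\g,\g]$: the derived subalgebra $[\g,\g]$ is a $G$-submodule of $\g$ not contained in $\z(\g)$, so Lemma \ref{g.submod} gives the containment. (c) $\z(\gn) \subseteq \z(\g)$: $\z(\gn)$ is a $G$-submodule of $\gn$ strictly smaller than $\gn$ by (a), so Lemma \ref{g.submod} applies. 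Combining (b) and (c) yields $\gn^{[\g,\g]} = \z(\gn)$: the containment $\subseteq$ comes from $\gn^{[\g,\g]} \subseteq \gn^{\gn} = \z(\gn)$ via (b), while $\supseteq$ follows because $\z(\gn) \subseteq \z(\g)$ is annihilated by all of $\g$, hence by $[\g,\g]$.

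For part (1), I use the elementary observation that for any Lie algebra $L$ and any $v \in L$ the centralizer $L_v$ contains both $\z(L)$ (the kernel of the adjoint representation of $L$) and $v$ itself, since $[v,v] = 0$. Consequently, if $v \notin \z(L)$ then $L_v \supsetneq \z(L)$. Taking $L = \gn$, which is non-abelian by (a), a generic $v$ lies outside the proper subspace $\z(\gn)$, so its $\gn$-centralizer strictly contains $\z(\gn) = \ker(\ad\!: \gn \to \gl(\gn))$; hence $\gn$ is not virtually free as an $\gn$-module.

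For part (2), the first assertion $\gn/\gn^{[\g,\g]} \neq 0$ follows immediately from $\gn^{[\g,\g]} = \z(\gn) \subsetneq \gn$. For the second assertion I apply Lemma \ref{ann} with $V = \gn$ and $\gd = [\g,\g]$; this reduces the claim to showing $\gn^{[[\g,\g],[\g,\g]]} \subseteq \gn^{[\g,\g]}$, and that in turn follows once I verify the structural claim $[[\g,\g],[\g,\g]] \supseteq \gn$, since then $\gn^{[[\g,\g],[\g,\g]]} \subseteq \gn^{\gn} = \z(\gn) = \gn^{[\g,\g]}$. To verify this structural claim, I argue case by case using \S\ref{struct}: for $G = G_2, F_4, B_n$, the quotient $\g/\gn$ is perfect (being the simple quotient of $\sl_3$ or $\spin_8$ in the first two cases and $[\so_{2n},\so_{2n}]$ in the third), so combined with $\gn \subseteq [\g,\g]$ and the $G$-submodule structure of $\g$, one obtains $[\g,\g] = \g$, whence $[[\g,\g],[\g,\g]] = \g \supseteq \gn$; for $G$ of type $C_n$, \S\ref{C.sec} directly records $[\g,\g] = \m$ and $[\m,\m] = \gn$. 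The main obstacle is justifying $[\g,\g] = \g$ in the $B_n$ case, where $\gn$ itself fails to be perfect (the Heisenberg algebra has $[\gn,\gn] = \z(\gn) \neq \gn$), so perfectness of $\g$ cannot be obtained from a purely formal extension argument but must be extracted from the explicit $G$-submodule structure of $\spin_{2n+1}$ in characteristic $2$ together with the perfectness of $\g/\gn \cong [\so_{2n},\so_{2n}]$.
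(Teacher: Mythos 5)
Your proposal is correct in substance but takes a noticeably different route from the paper, and the difference is worth recording. For part \eqref{n.vfree.vfree} the paper computes the generic stabilizer case by case (a maximal torus image for $G_2$ and $F_4$, the codimension-one kernel of $x \mapsto [x,h]$ for the Heisenberg algebra of type $B_n$, and a reduction to \cite[Example 3.4]{GG:large} for type $C_n$); your observation that $v \in \gn_v$ always, so that a non-abelian Lie algebra is \emph{never} virtually free as a module over itself, is a genuinely more elementary and uniform argument, and it is valid. For part \eqref{n.vfree.nz} the paper identifies $\gn^{[\g,\g]}$ explicitly in each type, whereas you derive the uniform identity $\gn^{[\g,\g]} = \z(\gn)$ from Lemma \ref{g.submod} (via $\gn \subseteq [\g,\g]$ and $\z(\gn) \subseteq \z(\g)$) and then reduce the vanishing statement, via Lemma \ref{ann}, to the containment $[[\g,\g],[\g,\g]] \supseteq \gn$. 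Both proofs ultimately rest on the same unproved-but-standard structural facts (perfectness of $\g$ for types $G_2$, $F_4$, and $B_n$ with $n \ge 3$, and $[\g,\g]=\m$, $[\m,\m]=\gn$ for type $C_n$, the latter recorded in \S\ref{C.sec}), so your appeal to Hiss/Hogeweij for perfectness in type $B_n$ is no worse than what the paper does.

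One concrete slip: your case division assigns all of type $B_n$ to the ``$\g$ is perfect'' branch, but for $n=2$ this fails. Indeed $\spin_5 \cong \sp_4$ and $[\sp_4,\sp_4] = \so_4$ has codimension $4$, and likewise $\g/\gn \cong [\so_4,\so_4]$ is the image of $\sl_2 \times \sl_2$, which is not perfect in characteristic $2$; so neither of your two justifications for $[\g,\g]=\g$ applies. The paper avoids this by restricting the perfectness claim to $B_n$ with $n \ge 3$ and letting the type-$C_n$ computation (valid for $n \ge 2$) absorb the rank-two case: the isomorphism $\Spin_5 \cong \Sp_4$ matches long roots to long roots and short to short, so the $\gn$ of type $B_2$ is literally the $\gn = [\so_4,\so_4]$ of type $C_2$, and your $C_n$ argument $[[\g,\g],[\g,\g]] = [\m,\m] = \gn$ then goes through verbatim. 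With that rerouting your proof is complete.
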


\begin{proof}  We first verify \eqref{n.vfree.vfree}.
If $G$ has type $G_2$ or $F_4$, then $\gn$ is the simple quotient of $\sl_3$ or $\spin_8$ by its center.  Thus $\gn$ acts on $\gn$ with trivial kernel and the stabilizer of a generic element of $\gn$ is the image of a maximal torus in $\sl_3$ or $\spin_8$, of dimension 1 or 2.

If $G$ has type $C_n$ with $n \ge 2$, then $\gn = [\so_{2n}, \so_{2n}]$, of codimension 1 in $[\g, \g] = \so_{2n}$.  Then $\gn^{[\g, \g]} = \gn \cap \z(\so_{2n})$ and the quotient $\gn / \gn^{[\g, \g]}$ is the irreducible representation of $\so_{2n}$ with highest weight the highest root.  Since the quotient is not virtually free for $\n$ as in \cite[Example 3.4]{GG:large}, neither is $\gn$ itself.   (Note that this also gives \eqref{n.vfree.nz} in this case.)

If $G$ has type $B_n$ with $n \ge 2$, then $\gn = \lsub$, the Heisenberg algebra from \S\ref{heis.sec}.
For a generic $h \in \lsub$, the map $x \mapsto [x,h] \in \z(\lsub)$ is a nonzero linear map, so has kernel of codimension 1, completing the proof of \eqref{n.vfree.vfree}.

Now consider \eqref{n.vfree.nz}. In the remaining cases where $G$ has type $B_n$ with $n \ge 3$, $G_2$, or $F_4$, the algebra $\g$ is perfect (immediately giving the second claim by Lemma \ref{ann}), so $\gn^{[\g,\g]} = \gn^\g = \gn \cap \z(\g)$, which is zero for type $G_2$ and $F_4$ and has codimension $2n$ in $\gn$ for type $B_n$.
\end{proof}

\begin{eg} \label{large.eg}
Let $U$ be a representation of $G$ on which $\g$ acts with kernel $\gn$.    For $G$ of type $B_n$ with $n \ge 2$, we take $U$ to be the natural irreducible representation of dimension $2n$.  For type $G_2$ and $F_4$, we take $U = \g/\gn$.  For type $C_n$ with $n \ge 3$, $\g/\gn$ is the Heisenberg subalgebra in $\spin_{2n+1}$ and we take $U$ to be the spin representation.  Note that for types $B_n$, $F_4$, and $G_2$, the algebra $\g$ is perfect and $U$ is irreducible so $U^{[\g, \g]} = 0$.   For type $C_n$, $[\g, \g]$ maps to the center of $\spin_{2n+1}$, and again we have $U^{[\g, \g]} = 0$.

Now take $W$ to be a finite sum of enough copies of $U$ so that $\g$ acts virtually freely on $W$, compare Example \ref{B.nat} for type $B_n$ and Example \ref{B.spin} for type $C_n$.  For types $F_4$ and $G_2$, take $W = U \oplus U$.

Let $V = \gn \oplus \bigoplus^m W$ for $W$ as in the preceding paragraph.  The subquotient $X = V/V^{[\g,\g]} = \gn/\gn^{[\g, \g]} \oplus \bigoplus^m W$ has, by Lemma \ref{n.nvfree}, $X^{[\g, \g]} = 0$ and $\dim X > m \dim W$ can be made arbitrarily large by increasing $m$.  On the other hand, for a generic vector $v = (n, w_1, \ldots, w_m) \in V$, the stabilizer $\g_v$ equals $\gn_n$, so by Lemma \ref{n.nvfree} $\g$ does not act virtually freely on $V$.
\end{eg}

\section{Proof of Theorem \ref{MT.special}} \label{MT.special.sec}

The goal of this section is to complete the proof of Theorem \ref{MT.special}, so we adopt its hypotheses.
In particular, $\car k$ is assumed to be special for $G$ and $\rho$ is faithful.
Note that $V^\gn = 0$; otherwise, as $V$ is irreducible, $V^\gn = V$ and $\drho$ would not be faithful.

Write the highest weight $\la$ of $\rho$ as $\la_0 + p \la_1$ where $\la_0, \la_1$ are dominant, $p = \car k$, and $\la_0$ is restricted.  Assume $\la_1 \ne 0$ for otherwise we are done by Theorem \ref{MT.restricted}, because the representations in Table \ref{irred.vfree} are not faithful.  As $G$ acts faithfully, it follows as in \cite[Lemma 1.1]{GG:irred} that $\la_0 \ne 0$ and $\dim V > \dim G$.  We will verify that $\g$ acts generically freely on $V$.

As $\la \in T^*$ and $p\la_1$ is in the root lattice hence in $T^*$, it follows that $\la_0$ is also in $T^*$.  Therefore, the representations $L(\la_0)$ and $L(p\la_1)$ are representations of $G$.  The representation $L(\la_0) \ot L(p\la_1)$ is a representation of $G$ that is irreducible (because its restriction to the simply connected cover of $G$ is the representation $L(\la_0) \ot L(\la_1)^{[p]}$) and has highest weight $\la$, so it is equivalent to $V$.

We will repeatedly use below that for any representation $X$ of $G$ and any $e > 0$, the representation $X \ot X^{[p]^e}$ is virtually free for $\g$, see \cite[Lemma 10.2]{GG:irred}.

\subsubsection*{Type $C$} Suppose first that $G$ has type $C_n$ for some $n \ge 3$.  The smallest nontrivial irreducible representation of $\Sp_{2n}$ is the natural representation of dimension $2n$.  If $\dim L(\la_0) = \dim L(p\la_1) = 2n$, then $G = \Sp_{2n}$ and $L(p\la_1)$ is a Frobenius twist of $L(\la_0)$, so $V$ is generically free as in the preceding paragraph.  Otherwise, at least one of $\dim L(\la_0)$, $\dim L(p\la_1)$ is gretater than $2n$.  The second smallest nontrivial irreducible representaiotn of $\Sp_{2n}$ is $L(\omega_{n-1})$ of dimension
\[
s(n) = \begin{cases}
2n^2 - n - 1 & \text{if $n$ is odd;} \\
2n^2 - n - 2 & \text{if $n$ is even.}
\end{cases}
\]
So $\dim V \ge 2n\,s(n)$, the values of which are as follows:
\[
\begin{array}{c|ccc}
n&3&4&\ge 5 \\ \hline
2n\,s(n)&84&208&>6n^2 + 6n
\end{array}
\]
Therefore, $G$ acts generically freely by Prop.~\ref{C.faithful}.

\subsubsection*{$G$ simply connected}
We are reduced to considering $G$ of type $B_n$ for $n \ge 2$, $F_4$, or $G_2$.  By Lemma \ref{SO.faithful}, $G$ is simply connected, and therefore $V \cong L(\la_0) \ot L(\la_1)^{[p]}$.  As a representation of $\g$, this is a sum of $\dim L(\la_1)$ copies of $L(\la_0)$, and in particular $L(\la_0)$ is itself faithful.  Put $m$ for the dimension of the smallest nontrivial irreducible representation of $G$, which is $2n$ (type $B_n$), 26 (type $F_4$), or 7 (type $G_2)$.  Then $V$ contains the $\g$-submodule $X := \oplus^m L(\la_0)$ on which $\g$ acts faithfully, and we will show that $\g$ acts generically freely on $X$.

If $\dim L(\la_0) = m$, then $X$ is isomorphic to $L(\la_0) \ot L(\la_0)^{[p]}$ as $\g$-modules, and we are done, so assume $\dim L(\la_0) > m$.   For $m'$ for the dimension of the second smallest nontrivial irreducible representation, we may assume that $\dim L(\la_0) \ge m'$, whence $\dim X \ge mm'$.  

If $G$ has type $F_4$ or $G_2$, then $m' = 246$ or 27 respectively, and Prop.~\ref{FG.dim} shows that $\g$ acts generically freely on $X$.

So suppose $G = \Spin_{2n+1}$ for some $n \ge 2$.  The smallest faithful irreducible representation of $G$ is the spin representation $L(\omega_1)$ of dimension $2^n$, so $\dim V \ge m2^n = n2^{n+1}$.  If $n \ge 4$, then $\dim V > 4n^2 + 4n$, and we are done by Lemma \ref{B.faithful}.
For $n = 2, 3$, a sum of $2n$ copies of the spin representation is generically free (Example \ref{B.spin}), so we may assume that $\la_0 \ne \omega_1$.  The next smallest faithful representation of $\Spin_{2n+1}$ is $L(\omega_1 + \omega_n)$ of dimension 16 for $n = 2$ or 48 for $n = 3$.  Therefore, $\dim X \ge 2n \cdot \dim L(\omega_1 + \omega_n) > 4n^2 + 4n$ and again we are done by Lemma \ref{B.faithful}, completing the proof of Theorem \ref{MT.special}.$\hfill\qed$

\section{Proof of Theorem \ref{MT.group}} \label{MT.group.sec}

Theorem \ref{MT.group} now follows quickly from what has gone before.  We repeat the argument given at the end of part I for the convenience of the reader.

The stabilizer $G_v$ of a generic $v \in V$ is finite \'etale if and only if the stabilizer $\g_v$ of a generic $v \in V$ is zero, i.e., if and only if $\g$ acts generically freely on $V$.  By Theorem A in \cite{GG:irred} (for which the case where $\car k$ is special is Th.~\ref{MT.special} in this paper), this occurs if and only if $\dim V > \dim G$ and $(G, \car k, V)$ does not appear in Table \ref{irred.nvfree}, proving Theorem \ref{MT.group}\eqref{MT.group.et}.

For Theorem \ref{MT.group}\eqref{MT.group.free}, we must enumerate in Table \ref{ngenfree} those representations $V$ such that  $\dim V > \dim G$, $V$ does not appear in Table \ref{irred.nvfree}, and the group of points $G_v(k)$ is not trivial.  Those $V$ with the latter property are enumerated in \cite{GurLawther}, completing the proof of Theorem \ref{MT.group}\eqref{MT.group.free}.$\hfill\qed$

\bibliographystyle{amsalpha}
\providecommand{\bysame}{\leavevmode\hbox to3em{\hrulefill}\thinspace}
\providecommand{\MR}{\relax\ifhmode\unskip\space\fi MR }
\providecommand{\MRhref}[2]{%
  \href{http://www.ams.org/mathscinet-getitem?mr=#1}{#2}
}
\providecommand{\href}[2]{#2}

\end{document}